\newcommand{\eps}{\varepsilon}
\newcommand{\N}{\mathbb N}
\newcommand{\norm}[2][]{\left\|#2\right\|_{#1}}
\newcommand{\pnorm}[2][]{\if #1'' \left|#2\right|_p \else \left|#2\right|_{#1} \fi}
\newcommand{\R}{\mathbb R}
\newcommand{\set}[1]{\left\{#1\right\}}
\newenvironment{enumroman}{\begin{enumerate}

}{\end{enumerate}}
\newtheorem{corollary}{Corollary}[section]
\newtheorem{lemma}[corollary]{Lemma}
\newtheorem{theorem}[corollary]{Theorem}
\theoremstyle{remark}
\newtheorem{remark}[corollary]{Remark}
\numberwithin{equation}{section}
\title{\bf Existence of solutions for  critical Choquard problem with singular coefficients \thanks{{\em MSC2010:} Primary 35R11, 35B33, Secondary 35B32, 58E05, \newline \indent\; { Key Words and Phrases:} Fractional Choquard equations, Critical exponents, Hardy-Littlewood-Sobolev inequality, Sign-changing solutions, Nehari manifold.} }%\indent\;
\author{\bf Yang Yang\thanks{Corresponding author. E-mail:yynjnu@126.com. Project supported by NSFC(No. 11501252, No. 11571176).} \\
School of  Science\\
Jiangnan university\\
Wuxi, Jiangsu  214122, China\\
[\bigskipamount]
\bf Yuling Wang\\
School of Science\\
Jiangnan University\\
Wuxi, 214122, China\\
[\bigskipamount]
\bf Yong Wang\\
School of Science\\
Jiangnan University\\
Wuxi, 214122, China}
\date{}
\begin{document}

\maketitle

\begin{abstract}
In this paper, we investigate the following fractional Choquard type equation:
\[
 (- \Delta)_p^s\, u  = \lambda\frac{|u|^{r-2}u}{|x|^\alpha}\,+\gamma \big(\int_\Omega \frac{|u|^q}{|x-y|^\mu}dy\big) |u|^{q-2}u  \ \ \text{in } \Omega,\ \ u = 0 \ \text{in } \R^N \setminus \Omega,
 \]
where $\Omega$ is a bounded domain in $\R^N$ with Lipschitz boundary, $p>1$, $0<s<1$, $N>sp$, $0\leq\alpha\leq sp$, $0<\mu<N$,$\lambda, \gamma>0$, $p\leq r\leq p^*_\alpha$, $p\leq 2q\leq 2p_{\mu,s}^*$, $p_\alpha^*=\frac{(N-\alpha)p}{N-sp}$ and $p_{\mu,s}^*=\frac{(N-\frac{\mu}{2})p}{N-sp}$ are the fractional critical Hardy-Sobolev and the critical exponents in the sense of Hardy-Littlewood-Sobolev inequality, respectively. Under some suitable assumptions,  positive and sign-changing solutions are obtained.
\end{abstract}

\newpage

\section{Introduction and main results}
\noindent

In this paper, we consider the following problem
\begin{equation} \label{1}
\left\{\begin{aligned}
(- \Delta)_p^s\, u &= \lambda\frac{|u|^{r-2}u}{|x|^\alpha}\,+\gamma \big(\int_\Omega \frac{|u|^q}{|x-y|^\mu}dy\big) |u|^{q-2}u,  && \text{in } \Omega,\\[10pt]
u & = 0, && \text{in } \R^N \setminus \Omega,
\end{aligned}\right.
\end{equation}
where $\Omega$ is a bounded domain in $\R^N$ with Lipschitz boundary, $1<p<\infty$, $0<s<1$, $N > sp$, $0\leq \alpha \leq sp$, $p\leq q\leq p_\alpha^*$,
$p\leq 2q\leq 2p_{\mu,s}^*$, and $(- \Delta)_p^s$ is the fractional $p$-Laplacian operator defined on smooth functions by
$$
(- \Delta)_p^s\, u(x) = 2 \lim_{\eps \searrow 0} \int_{\R^N \setminus B_\eps(x)} \frac{|u(x) - u(y)|^{p-2}\, (u(x) - u(y))}{|x - y|^{N+sp}}\, dy, \quad x \in \R^N.
$$

Recent years, problems involving fractional Laplacian and  Choquard equations have been investigated which may be found in \cite{pdSGSM18,VA,ACTY,AGSY,COAABNMY20,COAMY19,GY,GMS,ZSFGMY2,GS5,GSYZ,MS2017,MS20181,MS20182,SGY,CL,PPMXBZ,MLZL2,FWMX4,Y} and references therein.
In \cite{GY}, Gao and Yang established some existence results for critical exponent problem
\begin{equation*}
\left\{\begin{aligned}
-\Delta u&=(\int_{\Omega}\frac{|u|^{2_\mu^*}}{|x-y|^\mu}\,dy)|u|^{2_\mu^*-2}u+\lambda u, \ \ \ \hbox{in} \ \Omega,\\[10pt]
u&=0, \ \ \ \ \  \ \ \ \ \ \ \ \ \hbox{on} \ \partial\Omega,
\end{aligned}\right.
\end{equation*}
where $\Omega$ is a bounded domain of $\R^N$ with Lipschitz boundary, $\lambda$ is a real parameter, $N \geq3$, $2_\mu^*=\frac{2N-\mu}{N-2}$ is the critical exponent in the sense of the Hardy-Littlewood-Sobolev inequality.
Mukherjee and Sreenadh  \cite{TMKS3} extended the results above to  the nonlocal problem and obtained some existence, multiplicity, regularity and nonexistence results of solutions for the following problem
\begin{equation*}
\left\{\begin{aligned}
(-\Delta)^su&=(\int_{\Omega}\frac{|u|^{2_{\mu,s}^*}}{|x-y|^\mu}\,dy)|u|^{2_{\mu,s}^*-2}u+\lambda u, \ \ \  \ \ \hbox{in} \ \Omega,\\[10pt]
u&=0, \ \ \ \ \  \ \ \ \ \ \ \ \ \ \hbox{in}\ \R^N\setminus\Omega,
\end{aligned}\right.
\end{equation*}
where  $N>2s$ and $2_{\mu,s}^*=\frac{2N-\mu} {N-2s}$.
Wang and Yang \cite{YWYY} proved the bifurcation results for the critical Choquard problem
\begin{equation*}
\left\{\begin{aligned}
(-\Delta)^s_pu&=\lambda|u|^{p-2}u+(\int_{\Omega}\frac{|u|^{p_{\mu,s}^*}}{|x-y|^\mu}\,dy)|u|^{p_{\mu,s}^*-2}u, \ \ \  \ \ \hbox{in} \ \Omega,\\[10pt]
u&=0, \ \ \ \ \  \ \ \ \ \ \ \ \ \ \hbox{in} \ \R^N\setminus\Omega,
\end{aligned}\right.
\end{equation*}where  $N>sp$ and $p_{\mu,s}^*=\frac{(N-\frac{\mu}{2})p}{N-sp}$.

For the Hardy-Sobolev exponents involving the fractional $p$ laplacian operator,
Chen, Mosconi and Squassina \cite{MR77777}  studied  the following problem:
\begin{equation}\label{2}
\left\{\begin{aligned}
\left(- \Delta\right)^s_p u & = \frac{|u|^{p^*_\alpha-2} u}{|x|^\alpha}+\mu |u|^{q-2} u,   && \text{in } \Omega;\\[10pt]
u & = 0, &&  \text{in } \mathbb{R}^N\setminus\Omega;
\end{aligned}\right.
\end{equation}
where $p\leq q<\frac{Np}{N-ps}$, and established the existence of positive and sign-changing least energy solutions for problem \eqref{2} by finding the minimizer of the corresponding energy functional on positive Nehari and sign-changing Nehari sets. Yang \cite{MR88888}  studied  the existence of problem \eqref{2} when $q=p$ and obtained the multiplicity and bifurcation results by cohomological  index and pseudo-index. Chen \cite{C} established the existence of positive solutions to the fractional p-Kirchhoff type problem with a generalized Choquard nonlinearity and a critical Hardy-Sobolev term.
In this paper, we shall be interested in the study of Mountain-pass solutions and least energy sign-changing solutions to \eqref{1} and extend the results in \cite{MR77777} to Choquard type.

Now we introduce a variational setting for problem \eqref{1}.  Let
$$
[u]_{s,p} = \left(\int_{\R^{2N}} \frac{|u(x) - u(y)|^p}{|x - y|^{N+sp}}\, dx dy\right)^{1/p}
$$
be the Gagliardo seminorm of the measurable function $u : \R^N \to \R$, and let
$$
W^{s,p}(\R^N) = \set{u \in L^p(\R^N) : [u]_{s,p} < \infty}
$$
be the fractional Sobolev space endowed with the norm
$$
\norm[s,p]{u} = \big(\pnorm[p]{u}^p + [u]_{s,p}^p\big)^{1/p},
$$
where $\pnorm[p]{\cdot}$ is the norm in $L^p(\R^N)$ (see Di Nezza et al.\! \cite{MR713209} for details). We work in the closed linear subspace
$$
W_{0}^{s,p}(\Omega) = \set{u \in W^{s,p}(\R^N) : u = 0 \text{ a.e.\! in } \R^N \setminus \Omega},
$$
equivalently renormed by setting $\norm{\cdot}=[\cdot]_{s,p}$. Moreover,  $p_\alpha^*=\frac{(N-\alpha)p}{N-ps}$ is the fractional critical Hardy-Sobolev exponent, which arises from the general fractional Hardy-Sobolev inequality
\begin{equation*}
\left(\int_{\R^N}\frac{|u|^{p^*_\alpha}}{|x|^\alpha}dx\right)^{\frac{1}{p^*_\alpha}}\leq C(N,p,\alpha)[u]_{s,p}.
\end{equation*}
The latter is the scale invariant inequality and as such is critical for the embedding
\begin{equation*}
W_0^{s,p}(\Omega)\hookrightarrow L^r\left(\Omega,\frac{dx}{|x|^\alpha}\right),
\end{equation*}
in the sense that the latter is continuous for any $r\in [1,p^*_\alpha]$ but (as long as $0 \in \Omega$, as we are assuming) is compact if and only if $r<p^*_\alpha$, and we can set \begin{equation*}S_{\alpha,r}=\inf_{u\in W_0^{s,p}(\Omega)\setminus\{0\}}\frac{\norm{u}^r}{\int_\Omega\frac{|u|^r}{|x|^{\alpha}}dx}.\end{equation*}
 To the Choquard  term,  we have the following Lemma:
\begin{lemma}\cite[Theorem 4.3]{LL}\label{1.4}
Assume $1<r$, $t<\infty$ and $0<\mu<N$ with $\frac{1}{r}+\frac{1}{t}+\frac{\mu}{N}=2$. If $u\in L^r(\R^N)$ and $v\in L^t(\R^N)$,  there exists $C(N,\mu,r,t)>0$ such that
$$
\int_{\R^N}\int_{\R^N}\frac{|u(x)|\,|v(y)|}{|x-y|^{\mu}}\,dxdy\leq C(N,\mu,r,t)\pnorm[r]{u}\pnorm[t]{v}.
$$
\end{lemma}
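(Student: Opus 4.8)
The plan is to reduce the inequality, by duality, to a boundedness property of a Riesz-type potential, and then prove that property by splitting the kernel. Since \cite[Theorem 4.3]{LL} already contains the statement one may of course simply cite it, but the argument is short. By H\"older's inequality with exponents $t$ and $t'=t/(t-1)$, it suffices to show that $I_\mu w(x):=\int_{\R^N}|w(y)|\,|x-y|^{-\mu}\,dy$ maps $L^r(\R^N)$ into $L^{t'}(\R^N)$, for then
\[
\int_{\R^N}\int_{\R^N}\frac{|u(x)|\,|v(y)|}{|x-y|^\mu}\,dxdy=\int_{\R^N}I_\mu u(x)\,|v(x)|\,dx\leq\pnorm[t']{I_\mu u}\,\pnorm[t]{v}\leq C\,\pnorm[r]{u}\,\pnorm[t]{v}.
\]
I would first rewrite the scaling hypothesis $\tfrac1r+\tfrac1t+\tfrac\mu N=2$ as $\tfrac1{t'}=\tfrac1r+\tfrac\mu N-1$, the relation that makes $I_\mu\colon L^r\to L^{t'}$ dimensionally consistent, and note that combined with $1<t<\infty$ it forces $r<\tfrac N{N-\mu}$ --- the only range in which $I_\mu u$ can be finite on $L^r$.

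To prove the mapping property, I would split the kernel at a scale $R>0$ to be optimised pointwise. On $\{|x-y|>R\}$, H\"older's inequality gives $\int_{|x-y|>R}|u(y)|\,|x-y|^{-\mu}\,dy\leq C\,R^{N/r'-\mu}\pnorm[r]{u}$, where the tail integral converges precisely because $\mu r'>N$. On $\{|x-y|\le R\}$, a dyadic decomposition of the ball bounds $\int_{|x-y|\le R}|u(y)|\,|x-y|^{-\mu}\,dy$ by $C\,R^{N-\mu}Mu(x)$, with $Mu$ the Hardy--Littlewood maximal function of $u$. Optimising $I_\mu u(x)\leq C\big(R^{N-\mu}Mu(x)+R^{N/r'-\mu}\pnorm[r]{u}\big)$ over $R$ produces the pointwise bound $I_\mu u(x)\leq C\,\pnorm[r]{u}^{\,1-r/t'}\,(Mu(x))^{r/t'}$, the exponents being forced by $\tfrac1{t'}=\tfrac1r+\tfrac\mu N-1$ (and $0<r/t'<1$, using $\mu<N$ and $r<\tfrac N{N-\mu}$). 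Taking the $t'$-th power, integrating, and invoking the maximal inequality $\pnorm[r]{Mu}\leq C\pnorm[r]{u}$ then gives $\pnorm[t']{I_\mu u}\leq C\pnorm[r]{u}$.

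Most of this is bookkeeping of exponents; the only substantial ingredient is the Hardy--Littlewood maximal theorem, which is where $r>1$ is genuinely used (and $t>1$, equivalently $r<\tfrac N{N-\mu}$, is where the tail integral converges) --- so I expect the ``obstacle'', such as it is, to be making sure the borderline exponents are handled and the strict inequalities $1<r,t<\infty$ are never violated. One can repackage the kernel-splitting step as Young's convolution inequality in weak Lebesgue spaces via $|x|^{-\mu}\in L^{N/\mu,\infty}(\R^N)$; alternatively, the proof in \cite{LL} avoids the maximal function altogether, using symmetric decreasing rearrangement to reduce to radially decreasing $u,v$ and then a layer-cake and scaling computation. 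Either way one arrives at a constant $C(N,\mu,r,t)$ as claimed.
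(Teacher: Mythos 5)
Your proof is correct. Since the paper gives no proof here and only cites \cite[Theorem~4.3]{LL}, your argument is a genuine alternative to the one in the reference. Lieb and Loss prove the inequality via the Riesz rearrangement inequality: replacing $|u|$, $|v|$ and the kernel by their symmetric-decreasing rearrangements reduces matters to radial decreasing functions, after which a layer-cake/scaling computation closes the estimate; that route yields the sharp constant and the extremizers. Your route goes through duality and the Hedberg pointwise estimate $I_\mu u(x)\leq C\,\|u\|_r^{\,1-r/t'}\,(Mu(x))^{r/t'}$ obtained by splitting the kernel at an optimal radius, followed by the Hardy--Littlewood maximal theorem. This is shorter and avoids the rearrangement machinery, at the cost of producing a non-sharp constant --- which is all this paper needs. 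The exponent bookkeeping in your sketch checks out: $\tfrac1{t'}=\tfrac1r+\tfrac\mu N-1$; the tail integral converges iff $\mu r'>N$, i.e.\ $r<\tfrac N{N-\mu}$, which is equivalent to $t>1$; the near part costs $R^{N-\mu}Mu(x)$ because $\mu<N$; optimising over $R$ produces the interpolation exponent $\tfrac r{t'}\in(0,1)$; and $r>1$ is exactly where the maximal theorem enters.
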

\noindent By the Hardy-Littlewood-Sobolev inequality, there exists $\widetilde{C}(N,\mu, s, p)>0$ such that
\begin{equation*}\label{1.8}
\int_{\R^N}\int_{\R^N}\frac{|u(x)|^{p_{\mu,s}^\ast}\,|u(y)|^{p_{\mu,s}^\ast}}{|x-y|^{\mu}}\,dxdy\leq \widetilde{C}(N,\mu, s, p)\pnorm[{p_s^{\ast}}]{u}^{2p_{\mu,s}^{\ast}},
\end{equation*}
for all $u\in W_0^{s,p}(\Omega)$. Hence, by the Sobolev embedding,
$$
\int_{\Omega}\int_{\Omega}\frac{|u(x)|^{p_{\mu,s}^\ast}\,|u(y)|^{p_{\mu,s}^\ast}}{|x-y|^{\mu}}\,dxdy\leq C(N,\mu,s,p)\norm{u}^{2p_{\mu,s}^{\ast}},
$$
for some $C(N,\mu,s,p)>0$. Similarity, we define
\begin{equation}\label{1.9}
S_{H,L} := \inf_{u \in W_0^{s,p}(\Omega) \setminus \set{0}}\, \frac{\norm{u}^{p}}{\left(\int_\Omega\int_\Omega\frac{|u(x)|^{p_{\mu,s}^{\ast}}\,|u(y)|^{p_{\mu,s}^{\ast}}\,}
{|x-y|^{\mu}}dxdy\right)^{\frac{p}{2p_{\mu,s}^{\ast}}}}.
\end{equation}
Clearly, $S_{H,L}>0$.
Define
\begin{equation}\label{2222}
S_\mu:=\inf_{u \in W_0^{s,p}(\Omega) \setminus \set{0}}\frac{\norm{u}^p}{\int_\Omega\int_\Omega\frac{|u(x)|^{\frac{p}{2}}\,|u(y)|^{\frac{p}{2}}\,}
{|x-y|^{\mu}}dxdy}.
\end{equation}
So any solution of \eqref{1} is a critical point of the energy functional $J(u):W_0^{s,p}(\Omega)\rightarrow \R$ defined by
$$
J(u)=\frac{1}{p}\norm{u}^p-\frac{\lambda}{r}\int_{\Omega}\frac{|u|^r}{|x|^\alpha}\,dx-\frac{\gamma}{2q}\int_{\Omega}\int_{\Omega}\frac{|u(x)|^q|u(y)|^q}{|x-y|^\mu}\,dxdy.
$$
From \cite[Lemma 2.2]{MR77777}, for any $\lambda,\gamma\in\R$, $1\leq r\leq p^*_\alpha$ and $1\leq q\leq p_{\mu,s}^*$, $J\in C^1$ and $J':W_0^{s,p}(\Omega)\rightarrow W^{-s,p'}(\Omega)$ is both a strong-to-strong and weak-to-weak continuous operator. Now we can state our results as follows.

\begin{theorem} \label{Theorem 2}
Suppose that $0<\mu<N$,
\begin{enumroman}
\item
\begin{equation}\label{11112}
0\leq \alpha <sp,
\left\{\begin{aligned}
&\lambda>0, \gamma>0,\ \ \ if\ p<2q<2p_{\mu,s}^*,\ p< r<p^*_\alpha, \\
&0<\lambda<S_{\alpha,p}, \gamma>0,\ \ if \ p<2q<2p_{\mu,s}^*, \ r=p<p^*_\alpha,\\
&\lambda>0, 0<\gamma<S_\mu, \ \ if \ p=2q<2p_{\mu,s}^*,\ p<r<p^*_\alpha,
\end{aligned}\right.
\end{equation} problem \eqref{1} has a positive solution.
\item
\begin{equation}\label{11111}
0\leq\alpha <sp, \ \ p<q<p_{\mu,s}^*,\ \
\left\{\begin{aligned}
&\lambda>0, \gamma>0,\ \ \ if \ p< r<p^*_\alpha, \\
&0<\lambda<S_{\alpha,p}, \gamma>0,\ \ if  \ r=p<p^*_\alpha,
\end{aligned}\right.
\end{equation}
 problem \eqref{1} has a sign-changing solution.
\end{enumroman}
\end{theorem}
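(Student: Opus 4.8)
The plan is to treat both parts variationally, using that under the stated hypotheses every nonlinearity appearing in $J$ is \emph{subcritical}. Indeed, since $r<p_\alpha^*$ (or $r=p<p_\alpha^*$) the embedding $W_0^{s,p}(\Omega)\hookrightarrow L^r(\Omega,dx/|x|^\alpha)$ is compact, and since $q<p_{\mu,s}^*$ (equivalently $2q<2p_{\mu,s}^*$) Lemma~\ref{1.4} together with the compact Sobolev embedding makes the map $u\mapsto\int_\Omega\int_\Omega\frac{|u(x)|^q|u(y)|^q}{|x-y|^\mu}\,dx\,dy$ sequentially weak-to-strong continuous. Hence $J$, and the truncation introduced below, satisfy the Palais--Smale condition at every level, which is the source of compactness throughout.

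For part (i) I would work with
$$
J_+(u)=\frac1p\|u\|^p-\frac\lambda r\int_\Omega\frac{u_+^r}{|x|^\alpha}\,dx-\frac\gamma{2q}\int_\Omega\int_\Omega\frac{u_+(x)^q u_+(y)^q}{|x-y|^\mu}\,dx\,dy,\qquad u_+=\max\{u,0\}.
$$
For the mountain--pass geometry, the definitions of $S_{\alpha,p}$, $S_\mu$ and the fractional Hardy--Sobolev and Hardy--Littlewood--Sobolev inequalities show that the conditions $\lambda<S_{\alpha,p}$ (when $r=p$) and $\gamma<S_\mu$ (when $2q=p$) give $J_+(u)\ge\beta>0$ on a small sphere $\|u\|=\rho$, while in each of the three cases at least one of $r,2q$ exceeds $p$, so $J_+(tu_0)\to-\infty$ as $t\to\infty$ along any fixed $u_0\ge0$, $u_0\not\equiv0$. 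For compactness, a Palais--Smale sequence for $J_+$ is bounded --- in case 1 from $\min\{r,2q\}>p$ by the usual combination of $J_+(u_n)$ and $\langle J_+'(u_n),u_n\rangle$, and in the two boundary cases using in addition $\lambda<S_{\alpha,p}$, resp.\ $\gamma<S_\mu$, to absorb the exactly $p$-homogeneous term --- and the compactness of the two embeddings above, together with the $(S_+)$ property of $(-\Delta)_p^s$ on $W_0^{s,p}(\Omega)$, yields a strongly convergent subsequence. The mountain--pass theorem then gives $u\ne0$ with $J_+(u)=c\ge\beta>0$ and $J_+'(u)=0$. Testing $J_+'(u)=0$ against $-u_-$ (the lower-order terms vanish because $u_+$ and $u_-$ have disjoint supports) and using $|a-b|^{p-2}(a-b)(a_--b_-)\le-|a_--b_-|^p$, so that $\langle(-\Delta)_p^su,u_-\rangle\le-[u_-]_{s,p}^p$, forces $u_-\equiv0$. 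Thus $u\ge0$, $u\not\equiv0$ solves \eqref{1}, and a strong maximum principle for the fractional $p$-Laplacian gives $u>0$ in $\Omega$.

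For part (ii) I would use the sign-changing Nehari set, adapting the scheme of \cite{MR77777} to the nonlocal cross terms of the Choquard nonlinearity. Set
$$
\M=\big\{u\in W_0^{s,p}(\Omega):u_\pm\ne0,\ \langle J'(u),u_+\rangle=\langle J'(u),u_-\rangle=0\big\}.
$$
Two facts are needed: (a) for each $u$ with $u_\pm\ne0$ the fibering map $(t,\sigma)\mapsto J(tu_+-\sigma u_-)$ on $(0,\infty)^2$ has a unique critical point $(t_u,\sigma_u)$, which is its strict global maximum, so $t_uu_+-\sigma_uu_-\in\M$; and (b) $c_{\mathrm{sc}}:=\inf_\M J$ is attained. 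For (a) one analyses the system $\partial_tJ=\partial_\sigma J=0$: the superlinearity $q>p$ together with $r>p$ --- or $r=p$ with $\lambda<S_{\alpha,p}$, which keeps $\|u\|^p-\lambda\int_\Omega|u|^p/|x|^\alpha\,dx$ positive --- gives, for each fixed value of one variable, a unique root in the other and the monotonicity needed for the continuity/degree argument of \cite{MR77777}; the extra terms $\int_\Omega\int_\Omega u_+(x)^qu_-(y)^q/|x-y|^\mu\,dx\,dy$ are nonnegative and of order $q$ in each variable, so they preserve the structure. For (b) one takes a minimizing sequence $u_n\in\M$: the Nehari identities and $q>p$ (resp.\ $\lambda<S_{\alpha,p}$) bound $\|u_n\|$ and keep $(u_n)_\pm$ away from zero in the pertinent norms; passing to a weak limit $u$ and using the compactness of the subcritical embeddings shows $u_\pm\ne0$ and that $J$ and the Nehari functionals pass to the limit; the projection of (a) places a suitable rescaling of $u$ in $\M$, and weak lower semicontinuity of $\|\cdot\|^p$ gives $J(u)\le c_{\mathrm{sc}}$, hence equality. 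A deformation argument using the uniqueness of the projection in (a) to stay on $\M$ (as in \cite{MR77777}) then shows the minimizer is a free critical point of $J$, i.e.\ a sign-changing solution of \eqref{1}.

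The main obstacle is the Nehari analysis of part (ii), and specifically the claim in (a) that $\partial_tJ(tu_+-\sigma u_-)=\partial_\sigma J(tu_+-\sigma u_-)=0$ has a \emph{unique} solution in $(0,\infty)^2$ depending continuously on $u$. Since $(-\Delta)_p^s$ is nonlocal, $\langle(-\Delta)_p^s(tu_+-\sigma u_-),u_+\rangle$ already couples $t$ and $\sigma$ --- it is not a multiple of $t^{p-1}[u_+]_{s,p}^p$ --- and the Choquard term introduces a further coupling through $\int_\Omega\int_\Omega u_+^qu_-^q/|x-y|^\mu\,dx\,dy$; one must verify that the monotonicity exploited for problem \eqref{2} survives, which is exactly what $p<q$ and, when $r=p$, $\lambda<S_{\alpha,p}$ ensure. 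The remaining ingredients --- the Palais--Smale condition, boundedness of sequences, and nonvanishing of $u_\pm$ in the limit --- are comparatively routine here, since no nonlinearity sits at its critical exponent.
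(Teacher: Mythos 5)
Your treatment of part (i) is correct but takes a different route from the paper. You truncate and work with $J_+$, obtain a mountain-pass critical point, and then force $u_-\equiv 0$ by testing $J_+'(u)=0$ against $-u_-$ and using the pointwise inequality $|a-b|^{p-2}(a-b)(a_--b_-)\le -|a_--b_-|^p$. The paper instead keeps the even functional $J$, gets a mountain-pass critical point $\omega$ at level $c_1$, proves $c_1=\inf_{\mathscr{N}}J$, and then shows $\omega$ must be of constant sign by comparing $J(\omega)$ with $J(t_{\omega^+}\omega^+)$ for the fibering maximum $t_{\omega^+}\le 1$ and invoking strictness of \eqref{1333}. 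Your approach is more direct for extracting a nonnegative solution; the paper's approach has the side benefit of identifying $c_1$ with $\inf_{\mathscr{N}_+}J$, i.e.\ with the ground-state energy over one-signed configurations. Note that both arguments only give $u\ge 0$, $u\not\equiv 0$; as you observe, passing to strict positivity requires a strong maximum principle, a step the paper leaves implicit.

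For part (ii) your overall scheme (sign-changing Nehari set $\mathscr{N}_{sc}$, fibering projection, degree/deformation) is the same as the paper's, and you correctly identify the crux: the uniqueness and good dependence of the critical point of $(t_1,t_2)\mapsto J(t_1u^++t_2u^-)$. However, you leave exactly this step unresolved --- a Miranda-type ``for each fixed variable a unique root in the other'' argument does not by itself give uniqueness of the two-dimensional critical point, and the nonlocal coupling in $\langle(-\Delta)_p^s(t_1u^++t_2u^-),u^\pm\rangle$ and in the Choquard cross term $t_1^qt_2^q\int\int|u^+(x)|^q|u^-(y)|^q/|x-y|^\mu$ is precisely what could ruin it. The paper's Lemma~\ref{4.1} handles this cleanly by substituting $s_i=t_i^p$ and proving that $\psi(s_1,s_2)=J(s_1^{1/p}u^++s_2^{1/p}u^-)$ is strictly concave on $\R_+^2$: the nonlocal term is governed by concavity of $(s_1,s_2)\mapsto|s_1^{1/p}a+s_2^{1/p}b|^p$, the pure Choquard pieces $s_i^{2q/p}$ are convex since $q\ge p$, and --- though the paper does not spell it out --- the mixed Choquard contribution is convex because it is the square of $\|s_1^{q/p}|u^+|^q+s_2^{q/p}|u^-|^q\|_{B}$, which is itself a convex function of $(s_1,s_2)$ by monotonicity of the Hardy--Littlewood--Sobolev quadratic form under pointwise domination of nonnegative functions. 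You should either adopt this concavity argument or supply an actual proof of your monotonicity claim; as written, (a) is a gap, not a lemma.
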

\begin{theorem}
Suppose that $0< \mu <N$, $0\leq \alpha <sp$,  $r<p<p_\alpha^*$, $p<2q=2p_{\mu,s}^*$, $\gamma>0$. Then there exists $\lambda^*>0$ such that problem \eqref{1} has a positive  solution of minimal energy for all $\lambda\geq\lambda^*$.
\end{theorem}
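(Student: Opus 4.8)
The plan is to obtain the minimal–energy solution as a minimizer of $J$ on the Nehari manifold, exploiting the concave/convex (Ambrosetti--Brezis--Cerami) geometry forced by $r<p<2q=2p_{\mu,s}^*$, together with the compactness threshold attached to $S_{H,L}$ in \eqref{1.9}. For $u\neq0$ consider the fibre map $\phi_u(t)=J(tu)$: since $r<p<2q$ one has $\phi_u(t)<0$ for small $t>0$ and $\phi_u(t)\to-\infty$ as $t\to+\infty$, and $\phi_u$ has at most one local minimum $t_1(u)>0$ and one local maximum $t_2(u)>t_1(u)$. This splits $\mathcal N_\lambda=\set{u\neq0:\langle J'(u),u\rangle=0}$ into $\mathcal N_\lambda^{+},\mathcal N_\lambda^{-},\mathcal N_\lambda^{0}$ by the sign of
\[
\phi_u''(1)=(p-r)\lambda\int_\Omega\frac{|u|^r}{|x|^\alpha}\,dx-(2q-p)\gamma\int_\Omega\int_\Omega\frac{|u(x)|^q|u(y)|^q}{|x-y|^\mu}\,dx\,dy .
\]
First I would record: $J<0$ on $\mathcal N_\lambda^{+}$; using the Nehari identity with $S_{\alpha,r}$ and \eqref{1.9}, $\mathcal N_\lambda^{+}$ is bounded, and $c_\lambda:=\inf_{\mathcal N_\lambda}J=\inf_{\mathcal N_\lambda^{+}}J$ satisfies $-C\lambda^{p/(p-r)}\le c_\lambda<0$, so $c_\lambda\to-\infty$ as $\lambda\to+\infty$; and, since the ray through any $u\in\mathcal N_\lambda^{-}$ also meets $\mathcal N_\lambda^{+}$ at a point of strictly smaller energy, $c_\lambda$ does not exceed the energy of any nontrivial solution, so a minimizer of $c_\lambda$ really is of minimal energy. (The degenerate set $\mathcal N_\lambda^{0}$ must be controlled separately; this is one point where $\lambda\geq\lambda^*$ enters.)

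To get a minimizer, take a minimizing sequence $\seq{u_n}\subset\mathcal N_\lambda^{+}$, which is bounded by the above. Applying Ekeland's variational principle on $\mathcal N_\lambda$, and using that $\mathcal N_\lambda^{+}$ is relatively open in $\mathcal N_\lambda$ so that the Lagrange multiplier vanishes, one may assume $J(u_n)\to c_\lambda$ and $J'(u_n)\to0$ in $W^{-s,p'}(\Omega)$. Up to a subsequence $u_n\wto u_0$, and since $J'$ is weak-to-weak continuous (recalled above from \cite[Lemma 2.2]{MR77777}), $J'(u_0)=0$; thus $u_0$ is a weak solution of \eqref{1}, a priori possibly trivial.

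The decisive step is compactness. With $v_n=u_n-u_0\wto0$, the Brezis--Lieb lemma for $[\,\cdot\,]_{s,p}$ and its nonlocal analogue for the Choquard term give
\[
J(u_n)=J(u_0)+\frac{1}{p}\norm{v_n}^p-\frac{\gamma}{2q}\int_\Omega\int_\Omega\frac{|v_n(x)|^q|v_n(y)|^q}{|x-y|^\mu}\,dx\,dy+\o(1)
\]
and the analogous expansion of $\langle J'(u_n),u_n\rangle$; since $\langle J'(u_n),u_n\rangle\to0$ and $\langle J'(u_0),u_0\rangle=0$, writing $\norm{v_n}^p\to\ell\ge0$ forces $\gamma\int_\Omega\int_\Omega\frac{|v_n(x)|^q|v_n(y)|^q}{|x-y|^\mu}\,dx\,dy\to\ell$, and \eqref{1.9} then yields either $\ell=0$ or $\ell\ge\ell_0:=\gamma^{-p/(2q-p)}S_{H,L}^{2q/(2q-p)}>0$. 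If $\ell=0$, then $u_n\to u_0$ strongly, $u_0\in\mathcal N_\lambda^{+}$, and $J(u_0)=c_\lambda$. If $\ell\ge\ell_0$, then $c_\lambda=J(u_0)+\bigl(\tfrac1p-\tfrac1{2q}\bigr)\ell\ge J(u_0)+\bigl(\tfrac1p-\tfrac1{2q}\bigr)\ell_0$, which contradicts $c_\lambda<0$ if $u_0=0$ and gives $J(u_0)\le c_\lambda-\bigl(\tfrac1p-\tfrac1{2q}\bigr)\ell_0<c_\lambda$ if $u_0\neq0$ --- impossible, since $u_0$ would be a nontrivial solution below the minimal solution energy $c_\lambda$. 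The hypothesis $\lambda\geq\lambda^*$ is precisely what makes $c_\lambda$ negative and small enough (recall $c_\lambda\to-\infty$), together with the bookkeeping for $\mathcal N_\lambda^{0}$, for this chain to close. I expect this step --- the nonlocal Brezis--Lieb splitting and the sharp choice of $\lambda^*$ keeping $c_\lambda$ below the first $S_{H,L}$-threshold --- to be the main obstacle.

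It remains to make $u_0$ positive. From $\bigl|\,|u_0(x)|-|u_0(y)|\,\bigr|\le|u_0(x)-u_0(y)|$ we get $\norm{|u_0|}\le\norm{u_0}$ while the Hardy and Choquard terms are unchanged, so $\langle J'(|u_0|),|u_0|\rangle\le0$; hence the fibre map of $|u_0|$ attains its minimum at some $t^*\le1$ with $t^*|u_0|\in\mathcal N_\lambda^{+}$ and $J(t^*|u_0|)\le J(|u_0|)\le J(u_0)=c_\lambda$, so $t^*|u_0|$ is again a minimizer and we may assume $u_0\ge0$, $u_0\not\equiv0$. Finally, by regularity estimates together with the strong maximum principle (or the weak Harnack inequality) for the fractional $p$-Laplacian, $u_0>0$ in $\Omega$, which is the desired positive solution of minimal energy.
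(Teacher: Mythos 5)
You have read the condition $r<p<p_\alpha^*$ literally and built an Ambrosetti--Brezis--Cerami (concave--convex) Nehari decomposition around it, but this is not the paper's argument, and more importantly the concave--convex scenario is incompatible with the ``$\lambda\geq\lambda^*$'' direction of the theorem. If $r<p$ then the lower order perturbation is sublinear, and the standard fibre-map analysis you sketch gives a nonempty $\mathcal N_\lambda^+$ (and $\mathcal N_\lambda^0=\emptyset$) only for $\lambda$ \emph{small}: for a fixed direction $w$ with $\norm{w}=1$ and $a=\int_\Omega\frac{|w|^r}{|x|^\alpha}dx$, $b=\int_\Omega\int_\Omega\frac{|w(x)|^q|w(y)|^q}{|x-y|^\mu}dxdy$, the Nehari equation $\tau^{p-r}-\gamma b\,\tau^{2q-r}=\lambda a$ has two positive roots precisely when $\lambda a$ is below a threshold depending on $b$; as $\lambda\to\infty$ every fixed ray eventually fails this, so $\mathcal N_\lambda^+$ degenerates rather than opening up. Relatedly, your assertion ``$-C\lambda^{p/(p-r)}\le c_\lambda<0$, so $c_\lambda\to-\infty$'' is a non sequitur: a lower bound diverging to $-\infty$ says nothing about the behaviour of $c_\lambda$ itself.

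In fact the paper's compactness Lemma (case (iii) of Lemma~\ref{bbb}) and its Theorem~\ref{mountain} (case (iii), condition~\eqref{B}) are both stated and proved under $p<r<p_\alpha^*$, $p<2q=2p_{\mu,s}^*$; the chain $r<p<p_\alpha^*$ in the Theorem you are asked to prove is evidently a typo for $p<r<p_\alpha^*$ (note also that $p<p_\alpha^*$ is automatic when $\alpha<sp$, so writing the chain $r<p<p_\alpha^*$ is vacuous in its second inequality, another hint). With $p<r$ the geometry is mountain-pass, not concave--convex. The paper's proof then takes a fixed $u$, estimates the mountain-pass level by the purely subcritical two-term functional, getting
\[
c_1\le\sup_{t\ge0}\Bigl[\tfrac{t^p}{p}\norm{u}^p-\tfrac{\lambda t^r}{r}\int_\Omega\tfrac{|u|^r}{|x|^\alpha}dx\Bigr]
=\Bigl(\tfrac1p-\tfrac1r\Bigr)\frac{\norm{u}^{\frac{rp}{r-p}}}{\bigl(\lambda\int_\Omega\frac{|u|^r}{|x|^\alpha}dx\bigr)^{p/(r-p)}},
\]
which tends to $0$ as $\lambda\to\infty$ (this is exactly where $r>p$ is used --- for $r<p$ that supremum is $+\infty$), hence falls strictly below the Choquard threshold $\bigl(\tfrac1p-\tfrac1{2p_{\mu,s}^*}\bigr)\gamma^{-p/(2p_{\mu,s}^*-p)}S_{H,L}^{2p_{\mu,s}^*/(2p_{\mu,s}^*-p)}$ for $\lambda\geq\lambda^*$; Lemma~\ref{bbb}(iii) then gives $(PS)_{c_1}$, and Theorem~3.3 identifies the mountain-pass critical point with the Nehari minimizer and, via Lemma~\ref{122}, shows it can be taken of constant sign.

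So the gap is not a missing technical lemma but a misidentified geometry: your Nehari splitting, the negativity of $c_\lambda$, and your treatment of $\mathcal N_\lambda^0$ all presuppose $r<p$, which is incompatible both with the ``$\lambda$ large'' conclusion and with the paper's Lemma~\ref{bbb}. To repair the argument you should replace the concave--convex machinery by the mountain-pass level estimate above, verify $(PS)_{c_1}$ using the Brezis--Lieb splitting \eqref{223} and \eqref{1.9} as in Lemma~\ref{bbb}(iii), and then run the Nehari-characterization and sign argument of Section~3. Your final step (passing from a constant-sign solution to a positive one via the fractional strong maximum principle) is fine and matches what the paper leaves implicit.
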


\begin{theorem}
Suppose that $0< \mu <N$, $\alpha=sp$,  $r=p$, $\gamma>0$, $0<\lambda<S_{sp,p}$, problem  \eqref{1} has a positive  solution if $p<2q<2p_{\mu,s}^*$, and a sign-changing solution if $p<q<p_{\mu,s}^*$.
\end{theorem}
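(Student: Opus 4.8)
The plan is to follow the Nehari-manifold scheme of \cite{MR77777} (the positive Nehari set for the positive solution, the sign-changing Nehari set for the sign-changing one), the genuinely new work being the control of the nonlocal Choquard term; the hypothesis $0<\lambda<S_{sp,p}$ is precisely what renders the Hardy-critical choice $\alpha=sp$, $r=p$ harmless. Write $J(u)=\tfrac1p\,a(u)-\tfrac{\gamma}{2q}\,D(u)$, where $a(u):=\norm{u}^p-\lambda\int_\Omega|u|^p|x|^{-sp}\,dx$ and $D(u):=\int_\Omega\int_\Omega|u(x)|^q|u(y)|^q|x-y|^{-\mu}\,dx\,dy$. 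Two facts underlie everything. First, by the definition of $S_{sp,p}$ one has $c_0\norm{u}^p\le a(u)\le\norm{u}^p$ with $c_0:=1-\lambda S_{sp,p}^{-1}>0$, so $a$ controls $\norm{\cdot}^p$. Second, since $p<2q<2p_{\mu,s}^*$, equivalently $\tfrac{2Nq}{2N-\mu}<\tfrac{Np}{N-sp}$, Lemma~\ref{1.4} together with the compact embedding $W_0^{s,p}(\Omega)\hookrightarrow L^{2Nq/(2N-\mu)}(\Omega)$ makes $u\mapsto D(u)$ weakly sequentially continuous; this is exactly what fails (and has to be replaced by an energy threshold) when $2q=2p_{\mu,s}^*$.

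First I would check that $J$ satisfies the Palais--Smale condition at \emph{every} level. For a Palais--Smale sequence $(u_n)$ at level $c$, the identity $J(u_n)-\tfrac1{2q}\langle J'(u_n),u_n\rangle=\big(\tfrac1p-\tfrac1{2q}\big)a(u_n)$ together with $a\gtrsim\norm{\cdot}^p$ gives boundedness; passing to $u_n\wto u$, the weak-to-weak continuity of $J'$ (from \cite{MR77777}) gives $J'(u)=0$; the Brezis--Lieb lemma, applied both to $\norm{\cdot}^p$ and to the finite measure $|x|^{-sp}\,dx$ on $\Omega$, gives $a(u_n)=a(u)+a(u_n-u)+o(1)$, while $D(u_n)\to D(u)$, so $\langle J'(u_n),u_n\rangle=a(u_n)-\gamma D(u_n)\to0$ forces $a(u_n)\to a(u)$; combining, $a(u_n-u)\to0$, hence $u_n\to u$ in $W_0^{s,p}(\Omega)$. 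Informally, $\lambda<S_{sp,p}$ prevents a Hardy bubble at the origin from carrying off energy. Given this, the positive solution follows as in \cite{MR77777}: from $J(u)\ge\tfrac{c_0}{p}\norm{u}^p-C\norm{u}^{2q}$ (with $2q>p$) and $J(te)\to-\infty$ for fixed $e\neq0$, $J$ has the mountain-pass geometry, and minimizing $J$ over the positive Nehari manifold $\set{u\ge0,\ u\not\equiv0:\langle J'(u),u\rangle=0}$ — on which each fibering $t\mapsto J(tu)$ has a unique positive critical point because $a(u)>0$ and $2q>p$ — produces a nonnegative weak solution, which is strictly positive in $\Omega$ by the strong maximum principle for $(-\Delta)_p^s$.

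For the sign-changing solution (now using $p<q<p_{\mu,s}^*$) I would minimize $J$ over $\M:=\set{u\in W_0^{s,p}(\Omega):u^\pm:=\max\set{\pm u,0}\not\equiv0,\ \langle J'(u),u^+\rangle=\langle J'(u),u^-\rangle=0}$. For $u$ with $u^\pm\not\equiv0$, the two-parameter map $\Phi(t,\tau):=J(tu^+-\tau u^-)$ equals a positively $p$-homogeneous, strictly positive term $\Psi(t,\tau)$ — strictly positive because $[tu^+-\tau u^-]_{s,p}^p\ge S_{sp,p}\int_\Omega|tu^+-\tau u^-|^p|x|^{-sp}\,dx$ and $\lambda<S_{sp,p}$ — minus a positively $2q$-homogeneous, strictly positive term, so $\Phi\to-\infty$ as $t^2+\tau^2\to\infty$ in $[0,\infty)^2$, $\sup_{[0,\infty)^2}\Phi>0$, and since $\partial_t\Phi(0,\tau)>0$, $\partial_\tau\Phi(t,0)>0$ for $t,\tau>0$ — strictly, because of the nonlocal cross terms of the form $\int\int(u^-(y))^{p-1}u^+(x)|x-y|^{-N-sp}\,dx\,dy$ that appear once $p$-energies no longer split — the supremum is attained at an interior point, which lies in $\M$. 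Hence $\M\neq\emptyset$ and $m:=\inf_\M J>0$; a minimizing sequence $(w_n)\subset\M$ is bounded as above, and on a bounded subset of $\M$ one gets uniform lower bounds $\norm{w_n^\pm}\ge\delta>0$ (here $q>p$ is used, via $a(w_n^\pm)\le\langle(-\Delta)_p^s w_n,w_n^\pm\rangle-\lambda\int_\Omega(w_n^\pm)^p|x|^{-sp}\,dx=\gamma(D(w_n^\pm)+D_\ast(w_n^+,w_n^-))$ with $D_\ast$ the cross Choquard integral), so, passing to a weak limit $w$ and using the compactness of the Choquard terms, $w^\pm\not\equiv0$; a weak lower semicontinuity argument then yields $w\in\M$ with $J(w)=m$, and the usual deformation argument shows $w$ is a critical point of $J$, hence a sign-changing solution.

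The step I expect to be the main obstacle is the analysis of the two-parameter fibering $\Phi$: both $[tu^+-\tau u^-]_{s,p}^p$ and the Choquard functional couple $u^+$ and $u^-$ through genuinely nonlocal cross terms, so, unlike in the local case, $\Phi$ does not split into two one-dimensional fiberings, and one must prove directly that $\Phi$ has a \emph{unique} interior critical point on $(0,\infty)^2$ — needed both to see that $\M$ is a manifold containing the minimizer and to upgrade the constrained minimizer to an unconstrained critical point. I would handle the $p$-energy part by the monotonicity argument of \cite{MR77777}; the Choquard contribution only reinforces uniqueness, since $q>p$ makes the relevant vector field strictly increasing in each variable. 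Everything else — compactness, mountain-pass geometry, and the maximum principle — is routine once $0<\lambda<S_{sp,p}$ and $p<2q<2p_{\mu,s}^*$ are in hand; in particular, in contrast with the critical case $2q=2p_{\mu,s}^*$, no energy threshold is needed.
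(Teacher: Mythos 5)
Your proposal follows essentially the same route as the paper's: Palais--Smale at every level via Brezis--Lieb splittings of both $[\,\cdot\,]_{s,p}^p$ and $\int_\Omega|\cdot|^p|x|^{-sp}\,dx$ (which is exactly where $0<\lambda<S_{sp,p}$ enters, as in \eqref{2.15}), strong convergence of the subcritical Choquard term, mountain pass on the positive Nehari set for the positive solution, and a two-parameter fibering on $\mathscr{N}_{sc}$ followed by a degree/deformation argument for the sign-changing one --- this is precisely Lemma \ref{bbb}(ii), Theorem \ref{mountain}(ii), Lemma \ref{4.1}(ii), Theorem \ref{theorem4.2} and the remark following it. The one phrase worth tightening is ``the Choquard contribution only reinforces uniqueness'': in the concavity argument of Lemma \ref{4.1}, after the substitution $s_i=t_i^p$, the cross term $-s_1^{q/p}s_2^{q/p}D(u^+,u^-)$ is \emph{not} separately concave, and what makes the full fibering $\psi$ strictly concave is that the entire Choquard block $s_1^{2q/p}D(u^+)+2s_1^{q/p}s_2^{q/p}D(u^+,u^-)+s_2^{2q/p}D(u^-)$ is jointly convex, which relies on $q>p$ \emph{combined with} the Cauchy--Schwarz bound $D(u^+,u^-)^2\le D(u^+)D(u^-)$ for the Riesz kernel --- so one must argue about the sum, not term by term.
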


\section{Preliminary Results}

\begin{lemma}\label{1444}
Let $0<\mu<N$, $0\leq \alpha\leq sp$, $p\leq 2q\leq 2p_{\mu,s}^*$, $p\leq r\leq p^*_\alpha$. For any $\lambda$, $\gamma$ satisfying,
\begin{equation} \label{111}
\left\{\begin{aligned}
&\lambda>0, \gamma>0,\ \ \ if\ 2q>p, \ r>p, \\
&0<\lambda<S_{\alpha,p}, \gamma>0, \ \ \   if \ 2q>p, \ r=p,\\
&\lambda>0, 0<\gamma<S_\mu, \ \ \   if \ 2q=p, \ r>p,
\end{aligned}\right.
\end{equation}
there exists $\delta_0=\delta_0(\Omega,N,p,s,\mu)>0$, such that for any $u\in W_0^{s,p}(\Omega)$, it holds
$$
\langle J'(u),u\rangle \leq 0 \Rightarrow \norm{u}\geq \delta_0.
$$
\end{lemma}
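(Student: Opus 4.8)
The plan is to test the hypothesis $\langle J'(u),u\rangle\le 0$ directly. For any $u\in W_0^{s,p}(\Omega)$ one has
$$\langle J'(u),u\rangle=\norm{u}^p-\lambda\int_{\Omega}\frac{|u|^r}{|x|^\alpha}\,dx-\gamma\int_{\Omega}\int_{\Omega}\frac{|u(x)|^q|u(y)|^q}{|x-y|^\mu}\,dxdy,$$
so the assumption $\langle J'(u),u\rangle\le 0$ is equivalent to
$$\norm{u}^p\le\lambda\int_{\Omega}\frac{|u|^r}{|x|^\alpha}\,dx+\gamma\int_{\Omega}\int_{\Omega}\frac{|u(x)|^q|u(y)|^q}{|x-y|^\mu}\,dxdy.$$
The idea is to dominate both terms on the right by explicit powers of $\norm{u}$ and then, running through the three regimes of \eqref{111}, to extract a positive lower bound for $\norm{u}$. (The case $u=0$ is trivially excluded, since $\norm{u}=0$ cannot exceed $\delta_0$, so we may assume $u\ne 0$ throughout.)

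For the Hardy--Sobolev term I would use the very definition of $S_{\alpha,r}$. Since $S_{\alpha,r}>0$ for every $r\in[p,p^*_\alpha]$ --- by the fractional Hardy--Sobolev inequality at $r=p^*_\alpha$ and by the compact embedding $W_0^{s,p}(\Omega)\hookrightarrow L^r(\Omega,dx/|x|^\alpha)$ for $r<p^*_\alpha$ --- it follows that $\int_{\Omega}\frac{|u|^r}{|x|^\alpha}\,dx\le S_{\alpha,r}^{-1}\norm{u}^r$, which at the borderline $r=p$ reads $\int_{\Omega}\frac{|u|^p}{|x|^\alpha}\,dx\le S_{\alpha,p}^{-1}\norm{u}^p$. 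For the Choquard term I would apply Lemma~\ref{1.4} with $r=t=\frac{2N}{2N-\mu}$ to the pair $|u|^q,|u|^q$, getting $\int_{\Omega}\int_{\Omega}\frac{|u(x)|^q|u(y)|^q}{|x-y|^\mu}\,dxdy\le C\big(\int_\Omega|u|^{2Nq/(2N-\mu)}dx\big)^{(2N-\mu)/N}$, and then the fractional Sobolev embedding $W_0^{s,p}(\Omega)\hookrightarrow L^{2Nq/(2N-\mu)}(\Omega)$, which is available exactly because $p\le 2q\le 2p_{\mu,s}^*$ forces $1\le\frac{2Nq}{2N-\mu}\le p_s^*$. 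Together these give $\int_{\Omega}\int_{\Omega}\frac{|u(x)|^q|u(y)|^q}{|x-y|^\mu}\,dxdy\le C_q\norm{u}^{2q}$ for some constant $C_q>0$ (e.g. $C_q=S_{H,L}^{-2p_{\mu,s}^*/p}$ when $q=p_{\mu,s}^*$); at the borderline $2q=p$ I would instead keep the sharp constant from \eqref{2222} and write this as $S_\mu^{-1}\norm{u}^p$, since in that regime the bound must be compared with $\gamma$.

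Inserting these estimates into the above consequence of $\langle J'(u),u\rangle\le 0$, one handles the three cases of \eqref{111} separately. When $r>p$ and $2q>p$, dividing by $\norm{u}^p$ gives $1\le\lambda S_{\alpha,r}^{-1}\norm{u}^{r-p}+\gamma C_q\norm{u}^{2q-p}$, and since $r-p>0$ and $2q-p>0$ the right side vanishes as $\norm{u}\to0$, whence $\norm{u}\ge\delta_0$ for a suitable $\delta_0>0$. When $r=p$ and $2q>p$ (so $0<\lambda<S_{\alpha,p}$), one gets $(1-\lambda S_{\alpha,p}^{-1})\norm{u}^p\le\gamma C_q\norm{u}^{2q}$, and $1-\lambda/S_{\alpha,p}>0$ forces $\norm{u}^{2q-p}\ge(1-\lambda/S_{\alpha,p})/(\gamma C_q)>0$. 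When $2q=p$ and $r>p$ (so $0<\gamma<S_\mu$), one gets $(1-\gamma S_\mu^{-1})\norm{u}^p\le\lambda S_{\alpha,r}^{-1}\norm{u}^r$, and $1-\gamma/S_\mu>0$ forces $\norm{u}^{r-p}\ge(1-\gamma/S_\mu)S_{\alpha,r}/\lambda>0$. In each case the explicit positive lower bound is the desired $\delta_0$.

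This lemma involves no deep difficulty; the two points requiring care are (i) the exponent bookkeeping that lets Lemma~\ref{1.4} couple with the fractional Sobolev embedding over the entire admissible range $p\le 2q\le 2p_{\mu,s}^*$ --- in particular the identity that $\frac{2Nq}{2N-\mu}\le p_s^*$ is equivalent to $q\le p_{\mu,s}^*$ --- and (ii) the use, in the two endpoint regimes, of the sharp constants $S_{\alpha,p}$ and $S_\mu$, so that the smallness assumptions $\lambda<S_{\alpha,p}$ and $\gamma<S_\mu$ are precisely what makes the leading coefficients $1-\lambda/S_{\alpha,p}$ and $1-\gamma/S_\mu$ positive. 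Consistently, the remaining case $r=p=2q$ is not claimed in \eqref{111}, since there one would need a joint smallness condition on $(\lambda,\gamma)$.
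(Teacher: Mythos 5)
Your proposal is correct and is essentially the same argument as the paper's: bound the Hardy--Sobolev term via $S_{\alpha,r}$ and the Choquard term via the Hardy--Littlewood--Sobolev inequality coupled with the Sobolev embedding, obtaining $\langle J'(u),u\rangle\ge(1-\lambda S_{\alpha,p}^{-1}-C\norm{u}^{2q-p})\norm{u}^p$ (and the analogous forms in the other two regimes), so that non-positivity of $\langle J'(u),u\rangle$ forces the bracket to be non-positive and hence bounds $\norm{u}$ from below. The paper states these estimates compactly under the umbrella of "Hölder's inequality" with generic constants $C$, while you spell out the exponent bookkeeping and name the sharp constants $S_{\alpha,p}$, $S_\mu$ explicitly in the borderline cases, but the logical structure and the role of the smallness conditions on $\lambda$ and $\gamma$ are identical.
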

\begin{proof}
Apply the H$\ddot{o}$lder inequality on the last two terms of
$$
\langle J'(u),u\rangle=\norm{u}^p-\lambda\int_{\Omega}\frac{|u|^r}{|x|^\alpha}\,dx-\gamma\int_{\Omega}\int_{\Omega}\frac{|u(y)|^q|u(x)|^q}{|x-y|^\mu}\,dxdy,
$$
we get
$$\langle J'(u),u\rangle\geq\left\{
\begin{aligned}
&(1-C\norm{u}^{r-p}-C\norm{u}^{2q-p})\norm{u}^p, \ \ if\ 2q>p, \ r>p, \\
&(1-\frac{\lambda}{S_{\alpha,p}}-C\norm{u}^{2q-p})\norm{u}^p,\ \ \ if\ 2q>p, \ r=p,\\
&(1-\frac{\gamma}{S_\mu}-C\norm{u}^{r-p})\norm{u}^p, \ \ \ if \ 2q=p, \ r>p.
\end{aligned}
\right.
$$
The assumption $\langle J'(u),u\rangle\leq0$ forces the parenthesis above to be non-positive, which provides the claimed lower bound.
\end{proof}
\begin{lemma}(Palais-Smale condition)\label{bbb} Suppose  $0<\mu<N$,
\begin{enumroman}
\item If $0\leq \alpha <sp$, $r\leq p<p_\alpha^*$, $q<p_{\mu,s}^*$ and
\begin{equation}\label{227}
\left\{\begin{aligned}
&\lambda>0, \ \gamma>0,\ \ \ if\ 2q> p, \ r>p, \\
&0<\lambda<S_{\alpha,p}, \ \gamma>0, \ \ if \ 2q>p, \ r=p,\\
&\lambda>0,\ 0<\gamma<S_\mu, \ \ \ if \ 2q=p,\ r>p,
\end{aligned}\right.
\end{equation}
$J$ satisfies $(PS)_c$ for all $c\in\R$.
\item If $\alpha=sp$,  $r=p$, $p<2q<2p_{\mu,s}^*$, then for any $0<\lambda<S_{sp,p}$, $\gamma>0$, $J$ satisfies $(PS)_c$ for all $c\in \R$.
\item If $0\leq \alpha <sp,$ $2q=2p_{\mu,s}^*>p$ and $p<r<p^*_\alpha$, then for any $\lambda>0$, $\gamma>0$, $J$ satisfies $(PS)_c$ for all
$$
 c<(\frac{1}{p}-\frac{1}{2p_{\mu,s}^*})\frac{S_{H,L}^{2p_{\mu,s}^*/(2p_{\mu,s}^*-p)}}{\gamma^{p/(2p_{\mu,s}^*-p)}}.
$$

\end{enumroman}
\end{lemma}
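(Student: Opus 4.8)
The plan is to run, uniformly over the three cases, the classical compactness scheme for critical problems. Fix a sequence $(u_n)\subset W_0^{s,p}(\Omega)$ with $J(u_n)\to c$ and $J'(u_n)\to 0$ in $W^{-s,p'}(\Omega)$. First I would show $(u_n)$ is bounded, by testing with the combination $J(u_n)-\theta^{-1}\langle J'(u_n),u_n\rangle$ and choosing $\theta\in\{r,2q\}$ appropriately in each regime of \eqref{227}: the $\|u_n\|^p$ term then carries a strictly positive coefficient, the remaining superlinear nonlinear term either vanishes by the choice of $\theta$ or has a favourable sign (using $r\le 2q$ whenever both exceed $p$, and $p<r<2p_{\mu,s}^*=2q$ in case (iii)), and a genuinely resonant term --- namely $\int_\Omega|u_n|^p/|x|^\alpha$ when $r=p$ (in particular when $\alpha=sp,\ r=p$ in case (ii)), or the Choquard term when $2q=p$, and if $r<p$ the Hardy term is sublinear and simply absorbed --- is dominated by $\|u_n\|^p$ through the constants $S_{\alpha,p},\ S_{sp,p},\ S_\mu$ and the strict bounds $\lambda<S_{\alpha,p}$, $\lambda<S_{sp,p}$, $\gamma<S_\mu$. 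This gives $\|u_n\|\le C$.

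Next, up to a subsequence $u_n\rightharpoonup u$ in $W_0^{s,p}(\Omega)$, and by the compact embedding $W_0^{s,p}(\Omega)\hookrightarrow L^p(\Omega)$ also $u_n\to u$ a.e.\ in $\Omega$. Since $J'$ is weak-to-weak continuous, $J'(u_n)\rightharpoonup J'(u)$, so $J'(u_n)\to 0$ forces $J'(u)=0$; in particular $\langle J'(u),u\rangle=0$. Setting $v_n=u_n-u$, I would use the Brezis--Lieb splitting of the Gagliardo seminorm, $[u_n]_{s,p}^p=[u]_{s,p}^p+[v_n]_{s,p}^p+o(1)$, obtained by applying the classical Brezis--Lieb lemma in $L^p(\R^{2N})$ to $(x,y)\mapsto (u_n(x)-u_n(y))/|x-y|^{(N+sp)/p}$, which converge a.e.\ on $\R^{2N}$. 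For the lower-order terms I would use compactness wherever available: when $r<p_\alpha^*$ the embedding into $L^r(\Omega,dx/|x|^\alpha)$ is compact, so $\int_\Omega|u_n|^r/|x|^\alpha\to\int_\Omega|u|^r/|x|^\alpha$; when $q<p_{\mu,s}^*$ one has $u_n\to u$ in $L^{2Nq/(2N-\mu)}(\Omega)$, hence by Lemma~\ref{1.4} the Choquard energy of $u_n$ converges to that of $u$. When a term is critical, compactness is replaced by Brezis--Lieb: in case (ii), $\int_\Omega|u_n|^p/|x|^{sp}=\int_\Omega|u|^p/|x|^{sp}+\int_\Omega|v_n|^p/|x|^{sp}+o(1)$ (classical Brezis--Lieb for the measure $dx/|x|^{sp}$); in case (iii), the nonlocal Brezis--Lieb identity
\[
\int_\Omega\int_\Omega\frac{|u_n(x)|^{p_{\mu,s}^*}|u_n(y)|^{p_{\mu,s}^*}}{|x-y|^\mu}\,dxdy=\int_\Omega\int_\Omega\frac{|u(x)|^{p_{\mu,s}^*}|u(y)|^{p_{\mu,s}^*}}{|x-y|^\mu}\,dxdy+\int_\Omega\int_\Omega\frac{|v_n(x)|^{p_{\mu,s}^*}|v_n(y)|^{p_{\mu,s}^*}}{|x-y|^\mu}\,dxdy+o(1),
\]
which I would derive by combining Lemma~\ref{1.4} with the classical Brezis--Lieb lemma.

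To conclude, writing $\langle J'(u_n),u_n\rangle=\langle J'(u_n),u_n-u\rangle+\langle J'(u_n),u\rangle$, the first summand is $o(1)$ since $\|J'(u_n)\|\to 0$ and the second tends to $\langle J'(u),u\rangle=0$ by weak continuity, so $\langle J'(u_n),u_n\rangle\to0$. Substituting the above splittings and $\langle J'(u),u\rangle=0$, the lower-order contributions cancel and one is left with: in case (i), $[v_n]_{s,p}^p=o(1)$, i.e.\ $u_n\to u$; in case (ii), $[v_n]_{s,p}^p-\lambda\int_\Omega|v_n|^p/|x|^{sp}=o(1)\ge(1-\lambda/S_{sp,p})[v_n]_{s,p}^p+o(1)$, and $\lambda<S_{sp,p}$ gives $u_n\to u$; in case (iii), $[v_n]_{s,p}^p-\gamma\int_\Omega\int_\Omega|v_n|^{p_{\mu,s}^*}|v_n|^{p_{\mu,s}^*}/|x-y|^\mu=o(1)$. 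In the last case, pass to a further subsequence with $[v_n]_{s,p}^p\to\ell\ge0$; then $\gamma\int_\Omega\int_\Omega|v_n|^{p_{\mu,s}^*}|v_n|^{p_{\mu,s}^*}/|x-y|^\mu\to\ell$, and \eqref{1.9} forces $\ell\ge S_{H,L}(\ell/\gamma)^{p/(2p_{\mu,s}^*)}$, so $\ell=0$ or $\ell\ge S_{H,L}^{2p_{\mu,s}^*/(2p_{\mu,s}^*-p)}\gamma^{-p/(2p_{\mu,s}^*-p)}$. On the other hand, the Brezis--Lieb expansion of $J$ gives $c=J(u)+(\tfrac1p-\tfrac1{2p_{\mu,s}^*})\ell$, while $\langle J'(u),u\rangle=0$ together with $p<r<2p_{\mu,s}^*$ gives $J(u)=\lambda(\tfrac1p-\tfrac1r)\int_\Omega|u|^r/|x|^\alpha+\gamma(\tfrac1p-\tfrac1{2p_{\mu,s}^*})\int_\Omega\int_\Omega|u|^{p_{\mu,s}^*}|u|^{p_{\mu,s}^*}/|x-y|^\mu\ge0$. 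Hence $c\ge(\tfrac1p-\tfrac1{2p_{\mu,s}^*})\ell$; if $\ell>0$ this contradicts the assumed upper bound on $c$, so $\ell=0$ and $u_n\to u$. In all three cases $(PS)_c$ follows.

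I expect the main obstacle to be the nonlocal Brezis--Lieb identity for the critical Choquard energy in case (iii) --- both its proof (a careful interplay between the Hardy--Littlewood--Sobolev inequality and the classical Brezis--Lieb lemma, passing through a.e.\ convergence and boundedness in the appropriate Lebesgue space) and the verification that the subtracted quantities genuinely cancel, both in $\langle J'(u_n),u_n\rangle$ and in $J(u_n)$. The Brezis--Lieb splitting of the nonlinear Gagliardo seminorm is the other delicate ingredient to be quoted with care. Once the sign/coercivity bookkeeping of the first step is settled, cases (i) and (ii) are comparatively routine.
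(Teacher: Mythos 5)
Your proposal is correct and follows essentially the same route as the paper: boundedness of Palais–Smale sequences via a linear combination of $J(u_n)$ and $\langle J'(u_n),u_n\rangle$ (the paper uses the normalization $pJ-\langle J',\cdot\rangle$ rather than your $J-\theta^{-1}\langle J',\cdot\rangle$, but these are equivalent), then Brezis--Lieb splittings of the Gagliardo seminorm, the Hardy term, and the Choquard energy, with compactness for subcritical exponents and the level bound combined with the constant $S_{H,L}$ to kill the critical Choquard remainder. The only cosmetic difference is in case (iii), where you extract a subsequence with $[v_n]^p\to\ell$ and argue by dichotomy on $\ell$, while the paper bounds $\limsup\|u_k-u\|^p$ strictly below the threshold and absorbs the critical term; the underlying estimate is the same.
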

\begin{proof}
Suppose that $\{u_k\}$ is a $(PS)_c$ sequence of $J$, that is
$$
J(u_k)=c+o_k(1),\ \ \ \langle J'(u_k),\varphi\rangle=\langle \omega_k,\varphi\rangle, \ \  \omega_k\rightarrow 0 \  in \ W^{-s,p'}(\Omega).
$$
We can obtain
\begin{equation}\label{444}
\norm{u_k}^p-\frac{\lambda p}{r}\int_{\Omega}\frac{|u_k|^r}{|x|^\alpha}\,dx-\frac{\gamma p}{2q}\int_{\Omega}\int_{\Omega}\frac{|u_k(x)|^q|u_k(y)|^q}{|x-y|^\mu}\,dxdy=pJ(u_k)=pc+o_k(1),
\end{equation}
\begin{equation}\label{555}
\norm{u_k}^p-\lambda\int_{\Omega}\frac{|u_k|^r}{|x|^\alpha}\,dx-\gamma\int_{\Omega}\int_{\Omega}\frac{|u_k(x)|^q|u_k(y)|^q}{|x-y|^\mu}\,dxdy=\langle J'(u_k),u_k\rangle=o_k(1)\norm{u_k}.
\end{equation}
So
\begin{equation}\label{333}
C+o_k(1)\norm{u_k}\geq pJ(u_k)-\langle J'(u_k),u_k\rangle=\gamma(1-\frac{p}{2q})\int_{\Omega}\int_{\Omega}\frac{|u_k(y)|^q|u_k(x)|^q}{|x-y|^\mu}\,dxdy
+\lambda(1-\frac{p}{r})\int_{\Omega}\frac{|u_k|^r}{|x|^\alpha}dx.
\end{equation}
First we show that $\{u_k\}$ is bounded in $W_0^{s,p}(\Omega)$.\\
If $r>p$ and $2q>p$,  \eqref{333} implies
$$
\int_{\Omega}\int_{\Omega}\frac{|u_k(y)|^q|u_k(x)|^q}{|x-y|^\mu}\,dxdy\leq C(1+\norm{u_k}), \ \ \int_{\Omega}\frac{|u_k|^r}{|x|^\alpha}dx\leq C(1+\norm{u_k}).
$$
From \eqref{444}, we can get that
\begin{equation}\label{33332}
\  \  \  \  \  \   \  \  \  \ \  \  \   \norm{u_k}^p\leq C(1+\norm{u_k})
\end{equation}
and the boundness of $\{u_k\}$ in $W_0^{s,p}(\Omega)$ readily follows.\\
If $r=p$, $2q>p$, we obtain
\begin{equation}\label{666}
\int_{\Omega}\int_{\Omega}\frac{|u_k(y)|^q|u_k(x)|^q}{|x-y|^\mu}\,dxdy\leq C(1+\norm{u_k}).
\end{equation}
Then by Sobolev inequality, \eqref{555} and \eqref{666}, we have
$$
 \norm{u_k}^p(1-\frac{\lambda}{S_{\alpha,p}})\leq C(1+\norm{u_k}),
$$
so $\{u_k\}$ is bounded in $W_0^{s,p}(\Omega)$ if $\lambda<S_{\alpha,p}$. \\
If $r>p$, $2q=p$, \eqref{333} implies
$$
\int_{\Omega}\frac{|u_k|^r}{|x|^\alpha}dx\leq C(1+\norm{u_k}).
$$
From \eqref{2222} and \eqref{444},
$$
\left(1-\frac{\gamma}{S_\mu}\right)\norm{u_k}^p\leq\norm{u_k}^p-\gamma\int_{\Omega}\int_{\Omega}\frac{|u_k|^{p/2}|u_k|^{p/2}}{|x-y|^\mu}\,dxdy\leq C(1+\norm{u_k}),
$$
implies boundedness of $\{u_k\}$ if $0<\gamma<S_\mu$.\\
Thus, $\{u_k\}$ is bounded and passing if necessary to a subsequence such that $u_k\rightharpoonup u$ in $W_0^{s,p}(\Omega)$ as $k\rightarrow \infty$. From  \cite[Lemma 3.2]{MR874523}, we get
\begin{equation}\label{221}
\ \ \ \ \ \ \ \ \ \ \ \ \  \  \    \norm{u_k}^p=\norm{u_k-u}^p+\norm{u}^p+o(1),
\end{equation}
also \cite[Lemma 2.3]{GFYM123} gives
\begin{equation}\label{223}
 \int_{\Omega}\int_{\Omega}\frac{|u_k(x)|^q|u_k(y)|^q}{|x-y|^{\mu}}\,dxdy= \int_{\Omega}\int_{\Omega}\frac{|u_k(x)-u(x)|^q|u_k(y)-u(y)|^q}{|x-y|^{\mu}}\,dxdy+\int_{\Omega}
 \int_{\Omega}\frac{|u(x)|^q|u(y)|^q}{|x-y|^{\mu}}\,dxdy+o(1),
\end{equation}
when $1\leq q\leq p^*_{\mu,s}$.
In addition, if  $1\leq q<p^*_{\mu,s}$,
$$
\int_{\Omega}\int_{\Omega}\frac{|u_k(x)|^q|u_k(y)|^q}{|x-y|^{\mu}}\,dxdy\rightarrow\int_{\Omega}\int_{\Omega}
\frac{|u(x)|^q|u(y)|^q}{|x-y|^{\mu}}\,dxdy, \ \ as \ \ k\rightarrow\infty.
$$
since
$$
 \int_{\Omega}\int_{\Omega}\frac{|u_k(x)-u(x)|^q|u_k(y)-u(y)|^q}{|x-y|^{\mu}}\,dxdy\leq C(N,\mu)\pnorm[\frac{2N}{2N-\mu}]{(u_k-u)^q}\rightarrow 0, \ \ as\ \ k\rightarrow+\infty.
$$
On the other hand, for $1\leq r<p_\mu^*$,
\begin{equation}\label{Hardy}
\int_\Omega\frac{|u_k(x)|^r}{|x|^\alpha}dx=\int_\Omega\frac{|u(x)|^r}{|x|^\alpha}dx+o(1).\end{equation}
We prove that $u_k\rightarrow u$ in $W_0^{s,p}(\Omega)$. If $0\leq \alpha<sp$,  $q<p_{\mu,s}^*$ and $r<p^*_\alpha$, by the boundedness of $\{u_k\}$, \eqref{221}, \eqref{223} and \eqref{Hardy},  we can get
\begin{equation*}
 o_k(1)=\langle J'(u_k),u_k\rangle-\langle J'(u),u \rangle=\norm{u_k-u}^p,
\end{equation*}
which shows convergence.\\
If $\alpha=sp$, $r=p$ and  $q<p_{\mu,s}^*$, we can obtain
\begin{equation}\label{2.15}
 o_k(1)=\langle J'(u_k),u_k\rangle-\langle J'(u),u \rangle=\norm{u_k-u}^p-\lambda\int_\Omega\frac{|u_k-u|^p}{|x|^{sp}}dx
 \geq(1-\frac{\lambda}{S_{sp,p}})\norm{u_k-u}^p+o_k(1),
\end{equation}
which implies the convergence, if $0<\lambda<S_{sp,p}$.

Now it suffices to analyze the case $p<2q=2p_{\mu,s}^*$, $p<r<p^*_\alpha$. It is clear that $J'(u)=0$ and from $\langle J'(u),u \rangle=0$, we have that
\begin{equation}\label{225}
J(u)=J(u)-\frac{1}{p}\langle J'(u),u \rangle=\lambda(\frac{1}{p}-\frac{1}{r})\int_{\Omega}\frac{|u|^r}{|x|^\alpha}\,dx+\gamma(\frac{1}{p}-\frac{1}{2p_{\mu,s}^*})\int_{\Omega}\int_{\Omega}\frac{|u|^q|u|^q}{|x-y|^\mu}\,dxdy\geq0.
\end{equation}
 Then we get
\begin{equation}\label{226}
J(u_k)=J(u)+\frac{1}{p}\norm{u_k-u}^p-\frac{\gamma}{2p_{\mu,s}^*}\int_{\Omega}\int_{\Omega}\frac{|u_k-u|^{p_{\mu,s}^*}|u_k-u|^{p_{\mu,s}^*}}{|x-y|^\mu}\,dxdy+o_k(1),
\end{equation}
\begin{equation}\label{224}
o_k(1)=\langle J'(u_k),u_k \rangle-\langle J'(u),u \rangle=\norm{u_k-u}^p-\gamma\int_{\Omega}\int_{\Omega}\frac{|u_k-u|^{p_{\mu,s}^*}|u_k-u|^{p_{\mu,s}^*}}{|x-y|^\mu}\,dxdy+o_k(1).
\end{equation}
For $p<r<p_\alpha^*$ and $c<(\frac{1}{p}-\frac{1}{2p_{\mu,s}^*})\frac{S_{H,L}^{2p_{\mu,s}^*/(2p_{\mu,s}^*-p)}}{\gamma^{p/(2p_{\mu,s}^*-p)}}$.
By virtue of \eqref{224},
$$
\frac{1}{p}\norm{u_k-u}^p-\frac{\gamma}{2p_{\mu,s}^*}\int_{\Omega}\int_{\Omega}\frac{|u_k(y)-u(y)|^{p_{\mu,s}^*}|u_k(x)-u(x)|^{p_{\mu,s}^*}}{|x-y|^\mu}\,dxdy=(\frac{1}{p}-\frac{1}{2p_{\mu,s}^*})\norm{u_k-u}^p+o_k(1).
$$
From \eqref{225} and \eqref{226} we have
$$
\frac{1}{p}\norm{u_k-u}^p-\frac{\gamma}{2p_{\mu,s}^*}\int_{\Omega}\frac{|u_k(y)-u(y)|^{p_{\mu,s}^*}|u_k(x)-u(x)|^{p_{\mu,s}^*}}{|x-y|^\mu}\,dxdy\leq J(u_k)+o_k(1)=c+o_k(1).
$$
Therefore
$$
\hbox{lim sup}_{k\to\infty}(\frac{1}{p}-\frac{1}{2p_{\mu,s}^*})\norm{u_k-u}^p<(\frac{1}{p}-\frac{1}{2p_{\mu,s}^*})\frac{S_{H,L}^{\frac{2p_{\mu,s}^*}{2p_{\mu,s}^*-p}}}
{\gamma^{{\frac{p}{2p_{\mu,s}^*-p}}}}.
$$
Using this and  \eqref{1.9}, we have
\begin{eqnarray*}
\begin{aligned}
&o_k(1)=\norm{u_k-u}^p-\gamma\int_{\Omega}\int_{\Omega}\frac{|u_k(y)-u(y)|^{p_{\mu,s}^*}|u_k(x)-u(x)|^{p_{\mu,s}^*}}{|x-y|^\mu}\,dxdy\geq \norm{u_k-u}^p-\gamma S_{H,L}^{-\frac{2p_{\mu,s}^*}{p}}\norm{u_k-u}^{2p_{\mu,s}^*}
\\&=\norm{u_k-u}^p(1-\gamma S_{H,L}^{-\frac{2p_{\mu,s}^*}{p}}\norm{u_k-u}^{2p_{\mu,s}^*-p})\geq \omega_1\norm{u_k-u}^p,
\end{aligned}
\end{eqnarray*}
for some $\omega_1>0$, giving the claim.
\end{proof}

\begin{remark}\label{219}
Inspecting the proof, we see that the boundedness of Palais-Smale sequence follows solely from the condition $|J(u_k)|\leq c$ and $|\langle J'(u_k),u_k  \rangle|\leq c\norm{u_k}$.
\end{remark}

\section{Positive solution}
Existence of a nontrivial solution follows from the standard Mountain pass approach. Let
\begin{equation*}
\Gamma=\{\gamma\in C^0([0,1];W_0^{s,p}(\Omega)):\gamma(0)=0, J(\gamma(1))<0\},
\end{equation*}
\begin{equation*}\label{2223}
c_1=\mathop{inf}_{\gamma\in\Gamma}\mathop{sup}_{t\in[0,1]}J(\gamma(t)).
\end{equation*}
We have the following Theorem:
\begin{theorem}\label{mountain} Problem \eqref{1} has a nontrivial solution $u$ which satisfies $J(u)=c_1$ if one of the following conditions holds.
\begin{enumroman}
\item
\begin{equation}\label{227}
0\leq \alpha <sp,
\left\{\begin{aligned}
&\lambda>0, \ \gamma>0,\ \ \ if\  p< 2q< 2p_{\mu,s}^*, \ p< r <p_\alpha^*, \\
&0<\lambda<S_{\alpha,p}, \ \gamma>0, \ \ if\   p< 2q< 2p_{\mu,s}^*, \ p=r <p_\alpha^*,\\
&\lambda>0,\ 0<\gamma<S_\mu, \ \ \ if \ p= 2q< 2p_{\mu,s}^*, \ p<r <p_\alpha^*,
\end{aligned}\right.
\end{equation}
\item \begin{equation}\label{A}\alpha=sp,\ \ \  r=p,\ \ \  p<2q<2p_{\mu,s}^*,\ \ \  0<\lambda<S_{sp,p},\end{equation}
\item
\begin{equation}\label{B}0\leq \alpha <sp,\ \ \ p<r<p_\alpha^*,\ \ \ p<2q=2p_{\mu,s}^*, \ \lambda \ is \ large \ enough.\end{equation}
\end{enumroman}
\end{theorem}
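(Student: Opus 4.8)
The plan is to realize $u$ as a mountain pass critical point of $J$ on $W_0^{s,p}(\Omega)$, with the compactness supplied by Lemma~\ref{bbb}. Concretely I would verify the two ingredients of the Mountain Pass Theorem: (a) the mountain pass geometry of $J$, and (b) the $(PS)_{c_1}$ condition, and then read off $c_1\ge\beta>0=J(0)$, so that the resulting critical point is nontrivial. \textbf{Geometry near the origin.} The Hardy--Sobolev inequality gives $\int_\Omega|u|^r|x|^{-\alpha}\,dx\le S_{\alpha,r}^{-1}\norm{u}^r$ for $p\le r<p_\alpha^*$ (in particular $\le S_{\alpha,p}^{-1}\norm{u}^p$ if $r=p$, resp. $\le S_{sp,p}^{-1}\norm{u}^p$ in case~(ii)), while Lemma~\ref{1.4} applied to $|u|^q$ with exponents $\tfrac{2N}{2N-\mu}$, followed by the Sobolev embedding, gives $\int_\Omega\int_\Omega|u(x)|^q|u(y)|^q|x-y|^{-\mu}\,dxdy\le C\norm{u}^{2q}$ for $p\le 2q\le 2p_{\mu,s}^*$, and $\le S_\mu^{-1}\norm{u}^p$ when $2q=p$. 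Substituting into $J$ and using, in each subcase, that the exponents $r$, $2q$ occurring with strict sign exceed $p$ — and that $\lambda<S_{\alpha,p}$ (resp. $\lambda<S_{sp,p}$, resp. $\gamma<S_\mu$) keeps the coefficient of $\norm{u}^p$ strictly positive in the borderline subcases — yields $J(u)\ge\norm{u}^p\bigl(c_0-C\norm{u}^{\theta}\bigr)$ with $c_0>0$ and $\theta:=\min\{r,2q\}-p>0$. Hence there are $\rho,\beta>0$ (possibly depending on $\lambda,\gamma$) with $J(u)\ge\beta$ on $\{\norm{u}=\rho\}$.

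\textbf{Geometry far from the origin, and compactness in cases (i)--(ii).} Fix any $w\in C_c^\infty(\Omega)\setminus\{0\}$. In each of (i), (ii), (iii) at least one nonlinear term of $J$ carries an exponent $>p$ (either $r>p$ or $2q>p$) with negative coefficient, so $J(tw)\to-\infty$ as $t\to+\infty$; choosing $T$ with $J(Tw)<0$ and $\norm{Tw}>\rho$ makes $t\mapsto tTw$ an element of $\Gamma$, so $\Gamma\ne\emptyset$, $c_1<\infty$, and $c_1\ge\beta$. The Mountain Pass Theorem then produces a $(PS)_{c_1}$ sequence $\{u_k\}$. Under the hypotheses of (i) or (ii), Lemma~\ref{bbb}(i) or (ii) says $J$ satisfies $(PS)_c$ for \emph{every} $c\in\R$, so a subsequence of $\{u_k\}$ converges strongly in $W_0^{s,p}(\Omega)$.

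\textbf{The level estimate in case (iii).} Here Lemma~\ref{bbb}(iii) gives $(PS)_c$ only below $c^*:=(\tfrac1p-\tfrac1{2p_{\mu,s}^*})\,S_{H,L}^{2p_{\mu,s}^*/(2p_{\mu,s}^*-p)}\,\gamma^{-p/(2p_{\mu,s}^*-p)}>0$, so I must check $c_1<c^*$ once $\lambda$ is large — and the point is that no bubble/concentration analysis is needed. With $w$ as above put $A=\norm{w}^p$, $B=\int_\Omega|w|^r|x|^{-\alpha}\,dx\in(0,\infty)$, $C=\int_\Omega\int_\Omega|w(x)|^q|w(y)|^q|x-y|^{-\mu}\,dxdy$ (finite since $r<p_\alpha^*$, $2q=2p_{\mu,s}^*$), and choose $T>0$ with $\tfrac{A}{p}T^p-\tfrac{\gamma C}{2p_{\mu,s}^*}T^{2p_{\mu,s}^*}<0$ — possible uniformly in $\lambda$ since $2p_{\mu,s}^*>p$ — so that $t\mapsto tTw\in\Gamma$. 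Then, using $r>p$,
\[
c_1\ \le\ \max_{t\ge0}J(tw)\ \le\ \max_{t\ge0}\Bigl(\tfrac{A}{p}t^p-\tfrac{\lambda B}{r}t^r\Bigr)\ =\ A\Bigl(\tfrac1p-\tfrac1r\Bigr)\Bigl(\tfrac{A}{\lambda B}\Bigr)^{p/(r-p)}\ \xrightarrow[\lambda\to\infty]{}\ 0,
\]
so there is $\lambda^*>0$ with $c_1<c^*$ for all $\lambda\ge\lambda^*$, and Lemma~\ref{bbb}(iii) again yields a strongly convergent subsequence.

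\textbf{Conclusion and main obstacle.} In all three cases $\{u_k\}$ has a strong limit $u$; since $J\in C^1$ and $J'$ is strong-to-strong continuous, letting $k\to\infty$ in $J(u_k)\to c_1$ and $J'(u_k)\to0$ gives $J(u)=c_1$, $J'(u)=0$, i.e. $u$ solves \eqref{1}, and $c_1\ge\beta>0=J(0)$ forces $u\ne0$. The only genuinely delicate point is the inequality $c_1<c^*$ in case~(iii); as indicated, it reduces to the smallness of a one-variable maximum along a fixed test function as $\lambda\to\infty$, so it requires neither the extremal functions of $S_{H,L}$ nor any truncation estimates — everything else is the routine verification of the mountain pass geometry.
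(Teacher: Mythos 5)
Your proposal is correct and follows essentially the same route as the paper: mountain-pass geometry from the Hardy--Sobolev and Hardy--Littlewood--Sobolev estimates, $J(tu)\to-\infty$ along rays, compactness from Lemma~\ref{bbb} in cases (i)--(ii), and in case (iii) the observation that one may simply drop the nonpositive Choquard term and bound $c_1\le\sup_{t\ge 0}\bigl(\tfrac{t^p}{p}\norm{u}^p-\tfrac{\lambda t^r}{r}\int_\Omega\tfrac{|u|^r}{|x|^\alpha}\,dx\bigr)=\bigl(\tfrac1p-\tfrac1r\bigr)\norm{u}^{rp/(r-p)}\big/\bigl(\lambda\int_\Omega\tfrac{|u|^r}{|x|^\alpha}\,dx\bigr)^{p/(r-p)}\to 0$ as $\lambda\to\infty$, so no extremal-function/bubble analysis is required. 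The only cosmetic difference is that you write out the single-variable maximization with $A,B$, whereas the paper states the resulting formula directly (and contains a harmless typo in the exponent of $S_{H,L}$ in \eqref{2210}).
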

\begin{proof}
From $max\{2q,r\}>p$, we can obtain that for any given $u\in W_0^{s,p}(\Omega)$, $J(tu)\rightarrow-\infty$ for $t\rightarrow+\infty$. From Lemma \ref{1444}, we can get that
$$J(u)\geq\left\{
\begin{aligned}
&(\frac{1}{p}-C\norm{u}^{r-p}-C\norm{u}^{2q-p})\norm{u}^p,\ \ if\ 2q> p,\ r>p,\\
&(\frac{1}{p}-\frac{\lambda}{pS_{\alpha,p}}-C\norm{u}^{2q-p})\norm{u}^p, \ \ if \ 2q>p, \ r=p,\\
&(\frac{1}{p}-\frac{\gamma}{pS_\mu}-C\norm{u}^{r-p})\norm{u}^p, \ \ \ if \ 2q=p, \ r>p.
\end{aligned}
\right.
$$
Under the assumption  \eqref{227}, \eqref{A} or \eqref{B}, we can imply that
$$
 \mathop{inf}_{\norm[s,p]{u}=\varrho}J(u)>0,
$$
for sufficiently small $\varrho>0$. Therefore $c_1>0$ and  to apply the Mountain pass theorem it only needs to show that J satisfies the $(PS)_{c_1}$ condition. By Lemma \ref{bbb} this is certainly true for case (i) and  case (ii).\\
Now, it suffices to consider the  case (iii),
\begin{eqnarray}\label{2210}
\begin{aligned}
c_1&\leq \mathop{sup}_{t\geq0}J(tu)=\mathop{sup}_{t\geq 0}\left[\frac{t^p}{p}\norm{u}^p-\frac{\lambda t^r}{r}\int_{\Omega}\frac{|u|^r}{|x|^\alpha}\,dx-\frac{\gamma t^{2p_{\mu,s}^*}}{2p_{\mu,s}^*}\int_{\Omega}\int_{\Omega}\frac{|u|^{p_{\mu,s}^*}|u|^{p_{\mu,s}^*}}{|x-y|^\mu}\,dxdy\right]
\\&\leq \mathop{sup}_{t\geq0}\left[\frac{t^p}{p}\norm{u}^p-\frac{\lambda t^r}{r}\int_{\Omega}\frac{|u|^r}{|x|^\alpha}\,dx\right]
\\&=(\frac{1}{p}-\frac{1}{r})\frac{\norm{u}^{\frac{rp}{r-p}}}{(\lambda\int_{\Omega}\frac{|u|^r}{|x|^\alpha}\,dx)^{p/(r-p)}}
<(\frac{1}{p}-\frac{1}{2p_{\mu,s}^*})\frac{S_{H,L}^{\frac{p_{\mu,s}^*}{2p_{\mu,s}^*-p}}}{\gamma^\frac{p}{2p_{\mu,s}^*-p}}
\end{aligned}
\end{eqnarray}
if $\lambda>0$ is sufficiently large.
\end{proof}

Now define the Nehari manifold associated to $J$ as
$$
 \mathscr{N}=\{u\in W_0^{s,p}(\Omega)\setminus\{0\}:\langle J'(u),u \rangle=0\}
$$
with subsets
$$
\mathscr{N}_+=\mathscr{N}\cap\{u\geq 0\},  \  \ \ \ \  \mathscr{N}_-=\mathscr{N}\cap\{u\leq 0\}.
$$
Let $u^+=$max$\{0,u\}$ and $u^-=$min$\{u,0\}$.
\begin{lemma}\label{122}(\cite[Lemma 2.5]{MR77777})
For any $u\in W_0^{s,p}{(\Omega)}$, it holds
\begin{equation}\label{1333}
\langle(-\Delta)_p^su^\pm,u^\pm\rangle\leq\langle(-\Delta)_p^su,u^\pm\rangle\leq\langle(-\Delta)_p^su,u\rangle
\end{equation}
with strict inequality as long as $u$ is sign-changing.
\end{lemma}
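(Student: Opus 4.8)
The plan is to establish both inequalities in \eqref{1333} at the level of the integrand of the Gagliardo bilinear form and then integrate. Writing $J_p(t)=|t|^{p-2}t$ and using the quadratic-form representation
\[
\langle(-\Delta)_p^s v,w\rangle=\iint_{\R^{2N}}\frac{J_p(v(x)-v(y))\,\big(w(x)-w(y)\big)}{|x-y|^{N+sp}}\,dx\,dy ,
\]
the first inequality reduces to the scalar claim $|a^+-b^+|^p\leq J_p(a-b)\,(a^+-b^+)$ for all $a,b\in\R$, and the second, since $a-b=(a^+-b^+)+(a^--b^-)$, reduces to $J_p(a-b)\,(a^--b^-)\geq 0$.

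First I would observe that it suffices to treat $u^+$: applying the $u^+$ version to $-u$ and using that $(-\Delta)_p^s$ is odd while the pairing is bilinear yields the $u^-$ version immediately. Then I would prove the two scalar inequalities by splitting $(a,b)$ into sign regimes. If $a,b\geq 0$ then $a^+-b^+=a-b$ and $a^--b^-=0$, so the first inequality is an equality and the second is trivial; the case $a,b\leq 0$ is symmetric. The only substantive case is $a>0>b$ (the case $b>0>a$ following from the symmetry $a\leftrightarrow b$, which flips the sign of both $J_p(a-b)$ and $a^+-b^+$): here $a^+-b^+=a$, $a^--b^-=-b$, and $0<a<a-b$, so by monotonicity of $t\mapsto t^{p-1}$ on $[0,\infty)$ one gets $J_p(a-b)=(a-b)^{p-1}\geq a^{p-1}$, whence $J_p(a-b)(a^+-b^+)\geq a^p=|a^+-b^+|^p$ and $J_p(a-b)(a^--b^-)=(a-b)^{p-1}(-b)>0$. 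Moreover both scalar inequalities are \emph{strict} in this mixed regime, i.e.\ whenever $ab<0$.

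Finally, for the strictness statement, if $u$ is sign-changing then $E=\{u>0\}$ and $F=\{u<0\}$ have positive Lebesgue measure, so $(E\times F)\cup(F\times E)$ has positive measure in $\R^{2N}$ and there $u(x)u(y)<0$ while the kernel $|x-y|^{-(N+sp)}$ is positive a.e.; integrating the strict pointwise inequalities over this set upgrades both steps of \eqref{1333} to strict inequalities (the remaining pairs contribute $\geq 0$ to the gap by the non-strict scalar inequalities). I do not expect any serious obstacle here: the only place where the structure of the fractional $p$-Laplacian is genuinely used is the one-line monotonicity estimate $(a-b)^{p-1}\geq a^{p-1}$ for $a>0>b$, and the only point requiring mild care is checking that strictness survives integration, which is exactly where positivity of the kernel together with $|E|,|F|>0$ comes in.
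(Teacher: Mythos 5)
Your proof is correct. Note that the paper does not prove this lemma itself but quotes it from \cite{MR77777}, and your argument---rewrite both gaps as integrals of $J_p(a-b)(a^{\pm}-b^{\pm})$ and $|a^{\pm}-b^{\pm}|^p$ against the Gagliardo kernel, check the scalar inequalities by sign-splitting with $a>0>b$ the only substantive case (using monotonicity of $t\mapsto t^{p-1}$ for $p>1$, which the paper assumes), reduce $u^-$ to $u^+$ via $u\mapsto -u$, and derive strictness from the positive measure of $\{u>0\}\times\{u<0\}$ together with positivity of the kernel off the diagonal---is exactly the standard pointwise reduction used in that reference.
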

\begin{theorem}
Under the assumptions in Theorem \ref{mountain}, there exists a nonnegative critical point $\omega$ solving $J(\omega)=c_1$ and
\begin{equation}\label{33335}
c_1=\mathop{inf}_{u\in \mathscr{N}_+}J(u).
\end{equation}
\end{theorem}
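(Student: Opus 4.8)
The plan is to run a Nehari-manifold argument, using crucially that Theorem~\ref{mountain} already supplies a nontrivial critical point $\omega_0$ of $J$ at level $c_1$ (a solution of \eqref{1} is a critical point of $J$), so the mountain-pass level is realized on $\mathscr N$. Throughout I keep the standing hypotheses of Theorem~\ref{mountain} and use $\max\{r,2q\}>p$. First I would study, for $u\in W_0^{s,p}(\Omega)\setminus\{0\}$, the fibering map $\psi_u(t)=J(tu)$ by examining $t^{1-p}\psi_u'(t)=\norm{u}^p-\lambda t^{r-p}\int_\Omega\frac{|u|^r}{|x|^\alpha}\,dx-\gamma t^{2q-p}\int_\Omega\int_\Omega\frac{|u(x)|^q|u(y)|^q}{|x-y|^\mu}\,dxdy$. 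When $r>p$ and $2q>p$ this is manifestly strictly decreasing on $(0,\infty)$; when $r=p$ I would rewrite it as $\big(\norm{u}^p-\lambda\int_\Omega\frac{|u|^p}{|x|^\alpha}\,dx\big)-\gamma t^{2q-p}\int_\Omega\int_\Omega\frac{|u(x)|^q|u(y)|^q}{|x-y|^\mu}\,dxdy$ and use $\int_\Omega\frac{|u|^p}{|x|^\alpha}\,dx\le S_{\alpha,p}^{-1}\norm{u}^p$ together with $\lambda<S_{\alpha,p}$ to make the bracket strictly positive, and symmetrically the case $2q=p$ uses $\gamma<S_\mu$. Combined with $\psi_u(0)=0$, $\psi_u(t)\to-\infty$ as $t\to\infty$, and the bound $\inf_{\norm{v}=\varrho}J(v)>0$ established in the proof of Theorem~\ref{mountain}, this shows $\psi_u$ is strictly increasing then strictly decreasing, with a strictly positive global maximum at a unique $t_u>0$; hence $t_uu$ is the unique point of $\mathscr N$ on the ray $\{tu:t>0\}$ and $J(t_uu)=\max_{t\ge0}J(tu)$.

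Next I would identify the level and produce a nonnegative minimizer. For any $v\in\mathscr N_+$, choosing $T$ with $J(Tv)<0$ makes $t\mapsto tTv$ a path in $\Gamma$, so $c_1\le\sup_{t\in[0,1]}J(tTv)=\max_{t\ge0}J(tv)=J(v)$; taking the infimum gives $\inf_{\mathscr N_+}J\ge c_1$. Conversely, let $\omega_0$ be the critical point from Theorem~\ref{mountain}, so $\omega_0\in\mathscr N$ and $J(\omega_0)=c_1$. Since $[\,|\omega_0|\,]_{s,p}\le[\omega_0]_{s,p}$ while the remaining two terms of $J$ are unchanged when $\omega_0$ is replaced by $|\omega_0|$, we get $J(t|\omega_0|)\le J(t\omega_0)$ for all $t\ge0$, so $\widetilde\omega:=t_{|\omega_0|}|\omega_0|\in\mathscr N_+$ satisfies $J(\widetilde\omega)=\max_{t\ge0}J(t|\omega_0|)\le\max_{t\ge0}J(t\omega_0)=J(\omega_0)=c_1$. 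Combining, $c_1\le\inf_{\mathscr N_+}J\le J(\widetilde\omega)\le c_1$, which proves \eqref{33335} and shows the infimum is attained at the nonnegative, nontrivial function $\widetilde\omega$.

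It remains to show $\widetilde\omega$ is a free critical point of $J$, which I would do via the natural-constraint principle. Set $\Phi(u)=\langle J'(u),u\rangle$; this is $C^1$ because $p,r,2q>1$. Inserting $\Phi(u)=0$ (valid on $\mathscr N$) into the expression for $\langle\Phi'(u),u\rangle$ gives $\langle\Phi'(u),u\rangle=\lambda(p-r)\int_\Omega\frac{|u|^r}{|x|^\alpha}\,dx+\gamma(p-2q)\int_\Omega\int_\Omega\frac{|u(x)|^q|u(y)|^q}{|x-y|^\mu}\,dxdy$. In every case of Theorem~\ref{mountain} this is strictly negative on $\mathscr N$: for $r>p,\,2q>p$ both coefficients are negative and the two integrals cannot simultaneously vanish (otherwise $u=0$); for $r=p$ it reduces to $\gamma(p-2q)\int_\Omega\int_\Omega\frac{|u(x)|^q|u(y)|^q}{|x-y|^\mu}\,dxdy$, which is $<0$ since vanishing of that double integral would force $\norm{u}^p=\lambda\int_\Omega\frac{|u|^p}{|x|^\alpha}\,dx\le\lambda S_{\alpha,p}^{-1}\norm{u}^p<\norm{u}^p$; symmetrically for $2q=p$ using $\gamma<S_\mu$. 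Thus $\Phi'(u)\neq0$ on $\mathscr N$, so near each of its points $\mathscr N$ is a $C^1$ manifold and, $\widetilde\omega$ being a minimizer of $J$ on it, the Lagrange multiplier rule gives $J'(\widetilde\omega)=\theta\,\Phi'(\widetilde\omega)$ for some $\theta\in\R$. Pairing with $\widetilde\omega$ yields $0=\langle J'(\widetilde\omega),\widetilde\omega\rangle=\theta\,\langle\Phi'(\widetilde\omega),\widetilde\omega\rangle$, and $\langle\Phi'(\widetilde\omega),\widetilde\omega\rangle\neq0$ forces $\theta=0$, so $J'(\widetilde\omega)=0$. Then $\omega:=\widetilde\omega$ is the desired nonnegative nontrivial solution of \eqref{1} with $J(\omega)=c_1$.

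The main obstacle, I expect, is the last step — more precisely the non-degeneracy $\langle\Phi'(u),u\rangle\neq0$ on $\mathscr N$ in the borderline regimes $r=p$ and $2q=p$, where one must invoke exactly the smallness of $\lambda$ relative to $S_{\alpha,p}$ (resp.\ of $\gamma$ relative to $S_\mu$) so that the Nehari manifold does not degenerate. By contrast, the unimodality of $\psi_u$ and the seminorm monotonicity $[\,|u|\,]_{s,p}\le[u]_{s,p}$ are routine.
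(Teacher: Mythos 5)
Your proof is correct, but it diverges from the paper's argument in the final step in a way worth noting. Both proofs establish $c_1=\inf_{\mathscr N}J=\inf_{\mathscr N_+}J$ via fibering maps and the fact that the mountain-pass critical point $\omega_0$ lies in $\mathscr N$. Where you differ is in producing a \emph{nonnegative} critical point. The paper exploits Lemma~\ref{122}, specifically $\langle(-\Delta)_p^s\omega^+,\omega^+\rangle\le\langle(-\Delta)_p^s\omega,\omega^+\rangle$, to get $\langle J'(\omega^+),\omega^+\rangle\le 0$, hence $t_{\omega^+}\le 1$, and then shows $J(t_{\omega^+}\omega^+)\le J(\omega)$ with \emph{strict} inequality if $\omega^-\neq 0$. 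This contradicts $c_1=\inf_{\mathscr N}J$, so $\omega$ itself must already be of constant sign; no further work is needed because $\omega$ is a free critical point by construction. You instead replace $\omega_0$ by $|\omega_0|$, use the elementary bound $[\,|u|\,]_{s,p}\le[u]_{s,p}$, project onto $\mathscr N$ to obtain $\widetilde\omega=t_{|\omega_0|}|\omega_0|$ with $J(\widetilde\omega)=c_1$, and then must re-derive criticality via the natural-constraint (Lagrange multiplier) principle, verifying $\langle\Phi'(u),u\rangle<0$ on $\mathscr N$ in each parameter regime.

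The trade-off: the paper's route is shorter \emph{given} that the mountain-pass theorem already delivered a free critical point, because it merely has to rule out sign changes, and in fact proves the stronger statement that the mountain-pass solution itself is one-signed. Your route is more robust in flavor (it does not need Lemma~\ref{122} and would also work if one only knew that $c_1$ is attained on $\mathscr N$ without a prior free critical point), but it incurs the cost of the manifold/Lagrange-multiplier machinery, which is logically redundant here. One small caution on that machinery: $\Phi(u)=\langle J'(u),u\rangle$ has the same structure as $pJ(u)$ up to linear combinations of the same integrals, so $\Phi\in C^1$ on $W_0^{s,p}(\Omega)$ and your use of $\langle\Phi'(u),u\rangle\neq 0$ to both guarantee the manifold structure and kill the multiplier is sound, including in the borderline cases $r=p$ (using $\lambda<S_{\alpha,p}$, with $\alpha=sp$ in case (ii)) and $2q=p$ (using $\gamma<S_\mu$). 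So the argument holds; it is simply a longer path to the same destination.
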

\begin{proof}
The existence of a critical point at level $c_1$ is obtained in Theorem \ref{mountain} , so  it only remains to show that $\omega\in \mathscr{N}_+$ and $\eqref{33335}$. Fix $u\neq 0$ and define the function
\[
\R_+\ni s \mapsto \psi(s):=J(s^{1/p}u)=\frac{s}{p}\norm{u}^p-\frac{s^{r/p}\lambda}{r}\int_{\Omega}\frac{|u|^r}{|x|^\alpha}\,dx-\frac{s^{2q/p}\gamma}{2q}\int_{\Omega}\int_{\Omega}\frac{|u(x)|^q|u(y)|^q}{|x-y|^\mu}\,dxdy.
\]
It is clear that
\[
\psi(0)=0, \ \ \ \hbox{lim}_{s\rightarrow+\infty}\psi(s)=-\infty, \ \ \ \psi \ is \ concave.
\]
Moreover, from the assumption $max\{2q,r\}>p$, $\psi$ is strictly concave. Therefore $\psi$ has a unique maximum $s_u$, which is positive due to
$$\psi'(0)=\left\{
\begin{aligned}
&\frac{1}{p}\norm{u}^p, \ \ \ \  2q>p, \ \ r>p,
\\&\frac{1}{p}\left(\norm{u}^p-\lambda \int_\Omega \frac{|u|^r}{|x|^\alpha}dx\right), \ \ \ 2q>p, \ \ r=p, \ \ 0<\lambda<S_{\alpha,p},
\\&\frac{1}{p}\left(\norm{u}^p-\gamma\int_{\Omega}\int_{\Omega}\frac{|u(x)|^{p/2}|u(y)|^{p/2}}{|x-y|^{\mu}}\,dxdy\right), \ \ \ 2q=p, \ r>p,\ \ 0<\gamma<S_\mu,
\end{aligned}
\right.
$$
all of which are strictly positive. Then $\psi'(s)>0$ for $s<s_u$, $\psi'(s)<0$ for $s>s_u$. Changing variable $s=t^p$, this translates to
\begin{equation}\label{44445}
\left\{\begin{aligned}
&J(tu)\rightarrow -\infty, \ \ for \ t \rightarrow +\infty,
\\& t\mapsto J(tu),  \ \ \ has \ a \  unique \ positive \ maximum \ t_u,
\\& \langle J'(tu),u \rangle(t-t_u)<0, \ \ \forall \ t\neq t_u.
\end{aligned}\right.
\end{equation}
Hence, given $u\in \mathscr{N}$, $J(u)=\mathop{sup}_{t\geq 0}J(tu)$, it implies
$$
c_1\leq \mathop{inf}_{u\in \mathscr{N}}J(u),
$$
since  that
$$
c_1=\mathop{inf}_{\gamma\in\Gamma}\mathop{sup}_{t\in[0,1]}J(\gamma(t))\leq \mathop{inf}_{u\in W_0^{s,p}(\Omega)}\mathop{sup}_{t\geq 0}J(tu).
$$
In addition, the critical point $\omega$ at level $c_1$ certainly lies in $\mathscr{N}$, then we can get that
 $$
c_1= \mathop{inf}_{u\in \mathscr{N}}J(u).
$$ Now it remains to show that
$$
\mathop{inf}_{u\in \mathscr{N}}J(u)=\mathop{inf}_{u\in\mathscr{N}_+}J(u),
$$
i.e. that the ground state $\omega$ solving $J(\omega)=c_1$ can be chosen nonegative. Clearly the inequality $\leq$ above suffices. Since $J$ is even and $\omega\neq 0$, we may suppose without loss of generality that $\omega^+\neq 0$. By \eqref{1333}, we have
$$
\langle J'(\omega^+),\omega^+\rangle\leq \langle J'(\omega),\omega^+ \rangle=0,
$$
so that $t_{\omega^+}$ defined in \eqref{44445} satisfies $t_{\omega^+}\leq 1$. However
\begin{eqnarray*}
\begin{aligned}
\mathop{inf}_{u\in\mathscr{N_+}}J(u)&\leq J(t_{\omega^+}\omega^+)=t_{\omega^+}^r(\frac{\lambda}{p}-\frac{\lambda}{r})\int_{\Omega}\frac{|\omega^+|^r}{|x|^\alpha}\,dx+t_{\omega^+}^{2q}(\frac{\gamma}{p}-\frac{\gamma}{2q})\int_{\Omega}\int_{\Omega}\frac{|\omega^+(x)|^q|\omega^+(y)|^q}{|x-y|^\mu}\,dxdy
\\&\leq(\frac{\lambda}{p}-\frac{\lambda}{r})\int_\Omega\frac{|\omega|^r}{|x|^\alpha}\,dx+(\frac{\gamma}{p}-\frac{\gamma}{2q})\int_{\Omega}\int_{\Omega}\frac{|\omega|^q|\omega|^q}{|x-y|^\mu}\,dxdy
\\&=J(\omega)=c_1=\mathop{inf}_{u\in \mathscr{N}}J(u).
\end{aligned}
\end{eqnarray*}
Finally, observe that $\omega^-\neq 0$ implies that inequality in the second line of the previous chain is strict, therefore the mountain pass solution must be of constant sign.
\end{proof}

\section{Sign-changing solution}
Let
\begin{equation}
\ \ \ \ \ \ \ \ \ \ \ \ \ \ \ \  \mathscr{N}_{sc}=\{u\in\mathscr{N}:u^\pm\neq0, \langle J'(u),u^\pm \rangle=0\}.
\end{equation}
Clearly, any sign-changing solution to \eqref{1} belongs to $\mathscr{N}_{sc}$. First we show that  $\mathscr{N}_{sc}$ is nonempty. This is the content of the following lemma.
\begin{lemma}\label{4.1}
Let $u\in W_0^{s,p}(\Omega)$ be such that $u^\pm\neq0$ and  parameters satisfy  one of the  following conditions:
\begin{enumroman}
\item
\begin{equation}\label{4.111111}
0\leq\alpha<sp, \ \  p<q<p_{\mu,s}^*,\left\{\begin{aligned}
&\lambda>0, \gamma>0,\ \ \ if\  p< r<p^*_\alpha, \\
&0<\lambda<S_{\alpha,p}, \gamma>0,\ \  if  \ p=r<p^*_\alpha.
\end{aligned}\right.
\end{equation}
\item
\begin{equation}\label{4.122222}
\alpha=sp,\ \  r=p,\ \  p<q<p_{\mu,s}^*,\ \  0<\lambda< S_{sp,p}, \ \ \gamma>0.
\end{equation}
\end{enumroman}
Then the maximum
$$
 \mathop{sup}\limits_{(t_1,t_2)\in\R^2}J(t_1u^++t_2u^-)
$$
is attained at a unique $(\bar{t_1},\bar{t_2})\in\R_{+}^2$. Moreover, $(\bar{t_1},\bar{t_2})$ is a global maximum point if and only if
\begin{equation}\label{228}
\ \ \ \ \ \ \  \ \ \  \bar{t_1}\cdot\bar{t_2}>0, \ \ \ \ \ \  \bar{t_1}u^++\bar{t_2}u^-\in\mathscr{N}_{sc},
\end{equation}
and
\begin{equation}\label{229}
\ \ \ \ \ \ \  \ \ \ \ \langle J'(u),u^+ \rangle(\bar{t_1}-1)\geq0, \ \ \  \langle J'(u),u^- \rangle(\bar{t_2}-1)\geq 0.
\end{equation}
\end{lemma}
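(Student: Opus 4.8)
The plan is to pass to the variables $s_i=t_i^p$ and study, on the closed first quadrant $\overline{\R_+^2}$, the function
\[
g(s_1,s_2):=J\bigl(s_1^{1/p}u^++s_2^{1/p}u^-\bigr)=\tfrac1p\bigl\|s_1^{1/p}u^++s_2^{1/p}u^-\bigr\|^p-\tfrac{\lambda}{r}\bigl(s_1^{r/p}B_++s_2^{r/p}B_-\bigr)-\tfrac{\gamma}{2q}\,\Phi\bigl(s_1^{q/p},s_2^{q/p}\bigr),
\]
where $B_\pm=\int_\Omega\frac{|u^\pm|^r}{|x|^\alpha}\,dx>0$, $A_{\sigma\tau}=\int_\Omega\int_\Omega\frac{|u^\sigma(x)|^q|u^\tau(y)|^q}{|x-y|^\mu}\,dx\,dy$ for $\sigma,\tau\in\{+,-\}$ (all finite, with $A_{++},A_{--},A_{+-}>0$) and $\Phi(\sigma_1,\sigma_2)=A_{++}\sigma_1^2+2A_{+-}\sigma_1\sigma_2+A_{--}\sigma_2^2$; the splitting of the Hardy and Choquard terms uses that $u^+$ and $u^-$ have disjoint supports. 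Since $J$ is even and, for every pair $(x,y)$, replacing $u^-$ by $-u^-$ can only decrease $\bigl\|s_1^{1/p}u^+\pm s_2^{1/p}u^-\bigr\|$ while leaving the Hardy and Choquard terms unchanged, we have $\sup_{\R^2}J(t_1u^++t_2u^-)=\sup_{\overline{\R_+^2}}g$, so it suffices to work in the first quadrant.

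The crucial step is to show that $g$ is strictly concave on $\R_+^2$. The Hardy contribution is concave because $s\mapsto s^{r/p}$ is convex ($r\ge p$). For the Choquard part, $\Phi$ is the positive-definite quadratic form attached to the Riesz--Gram matrix of $|u^+|^q,|u^-|^q$, hence convex and nondecreasing in each argument on $\R_+^2$, while $s\mapsto s^{q/p}$ is strictly convex since $q>p$; the composition is therefore strictly convex, so $-\tfrac{\gamma}{2q}\Phi(s_1^{q/p},s_2^{q/p})$ is strictly concave. For the seminorm term I argue under the integral sign: for a.e.\ $(x,y)$ the integrand $(s_1,s_2)\mapsto|s_1^{1/p}(u^+(x)-u^+(y))+s_2^{1/p}(u^-(x)-u^-(y))|^p$ is linear in $(s_1,s_2)$ when $u$ keeps its sign on $\{x,y\}$, and otherwise equals $(\alpha s_1^{1/p}+\beta s_2^{1/p})^p$ with $\alpha,\beta>0$, a positively $1$-homogeneous function whose Hessian on $\R_+^2$ is negative semidefinite (equivalently, $\frac{d^2}{dt^2}(\alpha+\beta t^{1/p})^p=-\tfrac{(p-1)\alpha\beta}{p}\,t^{1/p-2}(\alpha+\beta t^{1/p})^{p-2}<0$), hence concave; integration against $|x-y|^{-N-sp}$ preserves concavity. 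Adding the three contributions yields strict concavity of $g$.

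Next I would record the shape of $g$ on $\overline{\R_+^2}$. One has $g(0,0)=0$; $g>0$ near the origin, using $\bigl\|s_1^{1/p}u^++s_2^{1/p}u^-\bigr\|^p\ge s_1\norm{u^+}^p+s_2\norm{u^-}^p$ (a consequence of Lemma~\ref{122}) and, in the case $r=p$, the bound $B_\pm\le\norm{u^\pm}^p/S_{\alpha,p}$ together with the hypothesis $\lambda<S_{\alpha,p}$ (resp.\ $\lambda<S_{sp,p}$ if $\alpha=sp$); and $g(s_1,s_2)\to-\infty$ as $s_1+s_2\to\infty$, since the seminorm term is $1$-homogeneous in $(s_1,s_2)$ and hence $\le C(s_1+s_2)$, while the Choquard term dominates it, being bounded below by a multiple of $s_1^{2q/p}+s_2^{2q/p}$ with $2q/p>1$. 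A strictly concave function on $\overline{\R_+^2}$ tending to $-\infty$ attains its supremum at a unique point $(\bar s_1,\bar s_2)$. To place it in the open quadrant, fix $s_1>0$ and let $s_2\searrow0$: by $(a+b)^p-a^p\ge p\,a^{p-1}b$ applied on the positive-measure set of sign-changing pairs, the seminorm increment is $\ge C\,s_2^{1/p}$, whereas the Hardy and Choquard increments are $O(s_2^{\min(r,q)/p})=o(s_2^{1/p})$; hence $g(s_1,s_2)>g(s_1,0)$ for small $s_2>0$, so the maximum lies neither on the axes nor at the origin, i.e.\ $\bar s_1,\bar s_2>0$.

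Finally, set $\bar t_i=\bar s_i^{1/p}>0$ and $\bar w=\bar t_1u^++\bar t_2u^-$. A direct computation gives
\[
\partial_{s_i}g(s_1,s_2)=\frac{1}{p\,s_i}\,\bigl\langle J'\bigl(s_1^{1/p}u^++s_2^{1/p}u^-\bigr),\ \bigl(s_1^{1/p}u^++s_2^{1/p}u^-\bigr)^{\pm}\bigr\rangle,
\]
so $\nabla g(\bar s_1,\bar s_2)=0$ is precisely $\langle J'(\bar w),\bar w^+\rangle=\langle J'(\bar w),\bar w^-\rangle=0$, i.e.\ $\bar w\in\mathscr{N}_{sc}$ with $\bar t_1\bar t_2>0$, which is \eqref{228}; conversely, if $t_1u^++t_2u^-\in\mathscr{N}_{sc}$ with $t_1,t_2>0$, then $(t_1^p,t_2^p)$ is a critical point of the strictly concave $g$, hence equals $(\bar s_1,\bar s_2)$, so \eqref{228} pins down the maximiser. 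For the sign relations \eqref{229} I would use $\langle J'(u),u^{\pm}\rangle=p\,\partial_{s_i}g(1,1)$ together with the $p$-Laplacian refinement of \eqref{1333}, namely $\langle(-\Delta)_p^s(au^++bu^-),au^+\rangle\lessgtr a^p\langle(-\Delta)_p^su,u^+\rangle$ according as $a\lessgtr b$ (the same pointwise estimate as above), inserted into the Nehari identities $\langle J'(\bar w),\bar w^{\pm}\rangle=0$ after a case distinction on the ordering of $\bar t_1,\bar t_2$ and their position relative to $1$. I expect the main obstacle to be the concavity of the seminorm term: for the fractional $p$-Laplacian $\bigl\|s_1^{1/p}u^++s_2^{1/p}u^-\bigr\|^p$ does not split, and concavity must be squeezed out of the elementary but slightly hidden fact that $(\alpha s_1^{1/p}+\beta s_2^{1/p})^p$ is concave on $\R_+^2$; the bookkeeping for \eqref{229} is the secondary difficulty.
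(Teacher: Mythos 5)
Your argument is, in its spine, the paper's: pass to $(s_1,s_2)=(t_1^p,t_2^p)$, reduce to the closed first quadrant via evenness and the sign analysis of the mixed seminorm term, split the Hardy and Choquard integrals over disjoint supports, show the resulting $\psi$ is strictly concave, place the maximiser in the interior, and read \eqref{228} off $\nabla\psi(\bar s_1,\bar s_2)=(0,0)$ via \eqref{335}. Your handling of the concavity (the positive-definite Riesz--Gram form composed with the strictly convex $s\mapsto s^{q/p}$ since $q>p$, plus the one-variable second-derivative computation for $(\alpha s_1^{1/p}+\beta s_2^{1/p})^p$ promoted to full concavity by $1$-homogeneity) is more explicit than the paper's terse remark, and your near-origin lower bound and coercivity estimate are sensible replacements for the paper's computation of one-sided boundary partials. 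Up to here the proof is sound and essentially identical in approach.

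The divergence, and the genuine gap, is \eqref{229}. The paper stays inside the concavity framework: it feeds $\xi=(1,1)$ and $\eta=(\bar s_1,1)$ (resp.\ $\eta=(1,\bar s_2)$) into the monotonicity inequality $(\nabla\psi(\xi)-\nabla\psi(\eta))\cdot(\eta-\xi)\geq 0$ and then passes through $\nabla\psi(\bar s_1,\bar s_2)=0$ and \eqref{335}, so no new ingredient is needed beyond what you have already established. You instead announce a pointwise ``$p$-Laplacian refinement'' of \eqref{1333} plus a case distinction, but the refinement is stated with the wrong orientation: comparing the integrands on $\Omega_+\times\Omega_-$ (the only block on which the two sides differ) gives
\[
\langle(-\Delta)_p^s(au^++bu^-),\,au^+\rangle\ \lessgtr\ a^p\,\langle(-\Delta)_p^su,\,u^+\rangle\qquad\text{according as }a\gtrless b,
\]
the opposite of what you wrote, and your version contradicts \eqref{1333} at $(a,b)=(1,0)$. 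More importantly, even with the corrected sign the case distinction does not close. Take $\bar t_1\geq\bar t_2$ and $\bar t_1\geq 1>\bar t_2$: combining the refinement (good direction) with the Nehari identity $\langle J'(\bar t_1u^++\bar t_2u^-),\bar t_1u^+\rangle=0$ yields
\[
\langle(-\Delta)_p^su,u^+\rangle\ \geq\ \lambda\,\bar t_1^{\,r-p}\!\int_\Omega\frac{|u^+|^r}{|x|^\alpha}\,dx \ +\ \gamma\Bigl(\bar t_1^{\,2q-p}A_{++}+\bar t_1^{\,q-p}\bar t_2^{\,q}A_{+-}\Bigr),
\]
and the coefficient $\bar t_1^{\,q-p}\bar t_2^{\,q}$ of the cross Choquard term has no definite relation to $1$, so you cannot conclude $\langle J'(u),u^+\rangle\geq 0$; the symmetric obstruction blocks the $u^-$-inequality. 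You should abandon the termwise comparison and instead use the gradient-monotonicity route the paper takes, which keeps the seminorm, Hardy and Choquard contributions bundled.
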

\begin{proof}
Let $\Omega_{\pm}=supp(u^\pm)$,
\begin{eqnarray*}
\begin{aligned}
J(t_1u^++t_2u^-)&=|t_1|^pA_++|t_2|^pA_{-}-|t_1|^rB_{+}-|t_2|^rB_{-}-|t_1|^{2q}C_{+}-|t_2|^{2q}C_{-}
\\&-\frac{\gamma |t_1|^q|t_2|^q}{q}\int_{\Omega\times\Omega}\frac{|u^+(x)|^q|u^-(y)|^q}{|x-y|^\mu}\,dxdy+\frac{2}{p}\int_{\Omega_+\times\Omega_-}
\frac{|t_1u^+(x)-t_2u^-(y)|^p}{|x-y|^{N+sp}}\,dxdy,
\end{aligned}
\end{eqnarray*}
where
\[
A_{\pm}=\frac{1}{p}\int_{\Omega_{\mp}^c\times\Omega_{\mp}^c}\frac{|u^+(x)-u^+(y)|^p}{|x-y|^{N+sp}}\,dxdy \ \ \ B_{\pm}=\frac{\lambda}{r}\int_{\Omega_{\pm}}\frac{|u|^r}{|x|^\alpha}\,dx \ \ C_{\pm}=\frac{\gamma}{2q}\int_{\Omega_{\pm}}\frac{|u(y)|^q|u(x)|^q}{|x-y|^\mu}\,dxdy
\]
Since $max\{q,r\}>p$ and $u^\pm\neq 0$, we can get that
$$
 \mathop{lim}\limits_{|t_1|+|t_2|\rightarrow+\infty}J(t_1u^++t_2u^-)=-\infty,
$$
so that a maximum exists. Since $u^-\leq 0$, analyzing the mixed integral term shows that the maximum must be attained on $t_1, t_2\geq0$ (or on $t_1,t_2\leq0$), which we will suppose henceforth. Now consider the function
$$
 \R_+^2\ni(s_1,s_2)\mapsto \psi(s_1,s_2)=J(s_1^{1/p}u^++s_2^{1/p}u^-)
$$
Then
\begin{eqnarray*}
\begin{aligned}
\psi(s_1,s_2)&=s_1A_++s_2A_--s_1^{r/p}B_+-s_2^{r/p}B_--s_1^{2q/p}C_+-s_2^{2q/p}C_-
\\&-\frac{\gamma |s_1|^{q/p}|s_2|^{q/p}}{q}\int_{\Omega\times\Omega}\frac{|u^+(x)|^q|u^-(y)|^q}{|x-y|^\mu}\,dxdy+\frac{2}{p}\int_{\Omega_+\times\Omega_-}
\frac{|s_1^{\frac{1}{p}}u^+(x)-s_2^{\frac{1}{p}}u^-(y)|^p}{|x-y|^{N+sp}}
\end{aligned}
\end{eqnarray*}
and
\begin{equation}\label{335}
\begin{aligned}
&\frac{\partial\psi}{\partial s_1}(s_1,s_2)=\frac{s_1^{-1}}{p}\langle J'(s_1^{\frac{1}{p}}u^++s_2^{\frac{1}{p}}u^-),u^+ \rangle,
\\&\frac{\partial\psi}{\partial s_2}(s_1,s_2)=\frac{s_2^{-1}}{p}\langle J'(s_1^{1/p}u^++s_2^{1/p}u^-),u^-\rangle.
\end{aligned}
\end{equation}
 since $min\{q,r\}\geq p$, moreover for any $a,b\geq0$, the function
$$
 \R_{+}^2\ni(s_1,s_2)\mapsto|s_1^{1/p}a+s_2^{1/p}b|^p
$$
is also concave,  Therefore $\psi$ is strictly concave in $\R_+^2$. A direct computation shows that for $(s_1,s_2)\neq(0,0)$
$$\frac{\partial}{\partial s_1}\psi\big|_{s_1=0}=\left\{
\begin{aligned}
&\frac{1}{p}\norm{u^+}^p,\ \ \ \ \ \ \ \ \ \ \ \ \ \ \ \ \ \ \ \ \ \ \ \ \ \ \ \ \ \ \ \ if\  q>p, \ \ r>p,\\
&\frac{\norm{u^+}^p}{p}-\frac{\lambda}{p}\int_\Omega\frac{{|u^+|}^p}{|x|^\alpha}dx, \ \ \ \ \  \ \ \ \ \ \ \ \ \ \ if\   q>p, \ \ r=p,
\end{aligned}
\right.
$$
$$\frac{\partial}{\partial s_2}\psi\big|_{s_2=0}=\left\{
\begin{aligned}
&\frac{1}{p}\norm{u^-}^p,\ \ \ \ \ \ \ \ \ \ \ \ \ \ \ \ \ \ \ \ \ \ \ \ \ \ \ \ \ \ \ \ if\ \ q>p, \ \ r>p, \\
&\frac{\norm{u^-}^p}{p}-\frac{\lambda}{p}\int_\Omega\frac{{|u^-|}^p}{|x|^\alpha}dx, \ \ \ \ \  \ \ \ \ \ \ \ \ \ \ if \ \  q>p, \ \ r=p,
\end{aligned}
\right.
$$
In both cases the derivatives are strictly positive due to \eqref{4.111111} or \eqref{4.122222}, hence the maximum of $\psi$ is attained in the interior of $\R_+^2$ and $\psi$ has its (unique) maximum at $(\bar{s_1},\bar{s_2})$ if and only if $\nabla \psi(\bar{s_1},\bar{s_2})=(0,0)$, which corresponds to the unique maximun for $J(t_1u^++t_2u^-)$ setting $\bar{t_i}=s_i^{-1/p}, i=1,2$. Explicitly computing $\nabla \psi(\bar{s_1},\bar{s_2})=\textbf{0}$ through \eqref{335} gives conditions \eqref{228}.\\
To prove \eqref{229}, it is clear that the concavity of $\psi$ is equivalent to
$$
 (\nabla\psi(\xi)-\nabla\psi(\eta))\cdot(\eta-\xi)\geq0, \ \ \ \forall \xi,\eta\in\R_{+}^2.
$$
Let $\xi=(1,1)$ and  $\eta=(\bar{s_1},1)$ or $\eta=(1,\bar{s_2})$, where $(\bar{s_1},\bar{s_2})$ is the maximum point for $\psi$, we obtain
$$
 \frac{\partial \psi}{\partial s_1}(1,1)(\bar{s_1}-1)\geq0, \ \ \ \ \ \ \ \  \ \ \frac{\partial\psi}{\partial s_2}(1,1)(\bar{s_2}-1)\geq0.
$$
Use \eqref{335} in these relations, we can get  \eqref{229}, since $\bar{s_1}-1$ and $\bar{s_1}^{1/p}-1$ have the same sign.
\end{proof}

\begin{theorem}\label{theorem4.2}
Under the conditions in Theorem \ref{4.1}, the problem
\begin{equation}\label{336}
c_2:=\mathop{inf}\limits_{u\in \mathscr{N}_{sc}}J(u)=\mathop{inf}\limits_{u^\pm\neq0}\mathop{sup}\limits_{(t_1,t_2)\in\R^2}J(t_1u^++t_2u^-)
\end{equation}
has a solution $v$ which is a sign-changing critical point for $J$.
\end{theorem}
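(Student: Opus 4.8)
The proof follows the classical Nehari-manifold scheme for sign-changing solutions, adapting the argument of \cite{MR77777} to the Choquard nonlinearity. Write $g(t_1,t_2)=t_1u^++t_2u^-$. By the evenness of $J$, the maximum of $(t_1,t_2)\mapsto J(g(t_1,t_2))$ over $\R^2$ equals its maximum over $\R_+^2$, and by Lemma~\ref{4.1} the latter is attained at a unique point $(\bar t_1,\bar t_2)$ whose associated element lies in $\mathscr{N}_{sc}$; hence $\inf_{u^\pm\neq0}\sup_{\R^2}J(t_1u^++t_2u^-)\geq c_2$. Conversely, if $v\in\mathscr{N}_{sc}$ then $(1,1)$ satisfies \eqref{228} and \eqref{229}, so by the equivalence in Lemma~\ref{4.1} it is the global maximum point, i.e.\ $J(v)=\sup_{\R^2}J(t_1v^++t_2v^-)\geq\inf_{u^\pm\neq0}\sup_{\R^2}J(t_1u^++t_2u^-)$; taking the infimum over $\mathscr{N}_{sc}$ yields the reverse inequality, proving \eqref{336}.

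\textbf{Step 1 ($c_2>0$ and a good minimizing sequence).} For $v\in\mathscr{N}_{sc}$, expanding $0=\langle J'(v),v^+\rangle$ and using $\langle(-\Delta)_p^sv^+,v^+\rangle\leq\langle(-\Delta)_p^sv,v^+\rangle$ from Lemma~\ref{122} together with $|v|^q\geq|v^+|^q$ in the Choquard density gives $\langle J'(v^+),v^+\rangle\leq0$, whence $\norm{v^+}\geq\delta_0$ by Lemma~\ref{1444}, and likewise $\norm{v^-}\geq\delta_0$; with $\norm{v^+}^p+\norm{v^-}^p\leq\norm{v}^p$ (again Lemma~\ref{122}) and the identity $J(v)=\lambda(\frac1p-\frac1r)\int_\Omega\frac{|v|^r}{|x|^\alpha}\,dx+\gamma(\frac1p-\frac1{2q})\int_\Omega\int_\Omega\frac{|v|^q|v|^q}{|x-y|^\mu}\,dx\,dy$ (meaningful since $q>p$ forces $2q>p$) this forces $c_2>0$. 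Choose $\{v_k\}\subset\mathscr{N}_{sc}$ with $J(v_k)\to c_2$; since $J(v_k)$ is bounded and $\langle J'(v_k),v_k\rangle=0$, Remark~\ref{219} and the boundedness argument of Lemma~\ref{bbb} give that $\{v_k\}$ is bounded, so $v_k\rightharpoonup v$ in $W_0^{s,p}(\Omega)$ along a subsequence and $v_k^\pm\rightharpoonup v^\pm$. By compactness of $W_0^{s,p}(\Omega)\hookrightarrow L^r(\Omega,|x|^{-\alpha}dx)$ for $r<p_\alpha^*$ and of the Choquard term for $q<p_{\mu,s}^*$, the inequality $\norm{v_k^\pm}^p\leq\lambda\int_\Omega\frac{|v_k^\pm|^r}{|x|^\alpha}\,dx+\gamma\int_\Omega\int_\Omega\frac{|v_k^\pm|^q|v_k^\pm|^q}{|x-y|^\mu}\,dx\,dy$ passes to the limit and, with $\norm{v_k^\pm}\geq\delta_0$, forces $v^\pm\neq0$. (When $\alpha=sp$, $r=p$, one works with the equivalent weight $\norm{\cdot}^p-\lambda\int_\Omega|\cdot|^p|x|^{-sp}dx$, still weakly lower semicontinuous by a Brezis--Lieb splitting since $\lambda<S_{sp,p}$.)

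\textbf{Step 2 (the projection of $v$ realizes $c_2$).} By Lemma~\ref{4.1} there is $(\bar t_1,\bar t_2)\in\R_+^2$ with $\bar v:=\bar t_1v^++\bar t_2v^-\in\mathscr{N}_{sc}$ and $J(\bar v)=\sup_{\R^2}J(t_1v^++t_2v^-)$. Since $v_k\in\mathscr{N}_{sc}$, Step~0 gives $J(\bar t_1v_k^++\bar t_2v_k^-)\leq\sup_{\R^2}J(t_1v_k^++t_2v_k^-)=J(v_k)$; letting $k\to\infty$, weak lower semicontinuity of $u\mapsto\norm{u}^p$ (respectively of the modified functional when $\alpha=sp$) together with strong convergence of the subcritical terms yields $J(\bar v)\leq\liminf_kJ(v_k)=c_2$, and since $\bar v\in\mathscr{N}_{sc}$ we get $J(\bar v)=c_2$. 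Thus $\bar v$ minimizes $J$ over $\mathscr{N}_{sc}$; it remains to show $J'(\bar v)=0$, which is the main obstacle.

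\textbf{Step 3 ($\bar v$ is a free critical point).} Suppose $J'(\bar v)\neq0$, so $\norm{J'(w)}\geq\sigma>0$ on a ball $\overline{B_{3\rho}(\bar v)}$. Because $\bar v\in\mathscr{N}_{sc}$, Lemma~\ref{4.1} makes $(1,1)$ the strict maximum of $(t_1,t_2)\mapsto J(t_1\bar v^++t_2\bar v^-)$, so on a small square $D=[1-\delta,1+\delta]^2$ one has $\max_DJ\circ g=c_2$ and $\max_{\partial D}J\circ g\leq c_2-\varepsilon_0$ (with $g(t_1,t_2)=t_1\bar v^++t_2\bar v^-$), and after shrinking $\delta$ also $g(D)\subset B_\rho(\bar v)$. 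Apply the quantitative deformation lemma at level $c_2$: for $0<\varepsilon<\varepsilon_0$ there is a homeomorphism $\eta$, equal to the identity off $J^{-1}([c_2-\varepsilon,c_2+\varepsilon])\cap B_{3\rho}(\bar v)$, non-increasing along $J$, with $J(\eta(w))\leq c_2-\varepsilon$ whenever $J(w)\leq c_2+\varepsilon$ and $w\in B_\rho(\bar v)$. Then $J(\eta(g(t_1,t_2)))\leq c_2-\varepsilon$ on all of $D$, while on $\partial D$ we have $J\circ g<c_2-\varepsilon$ so $\eta\circ g=g$ there. For $\rho,\delta$ small the map $\eta\circ g$ retains nonzero positive and negative parts on $D$, and $(t_1,t_2)\mapsto\big(\langle J'(\eta(g)),(\eta(g))^+\rangle,\langle J'(\eta(g)),(\eta(g))^-\rangle\big)$ agrees on $\partial D$ with $(t_1,t_2)\mapsto\big(t_1\langle J'(g),\bar v^+\rangle,\,t_2\langle J'(g),\bar v^-\rangle\big)$, which by the concavity and sign information of Lemma~\ref{4.1} (cf.\ \eqref{335} and \eqref{229}) changes sign on opposite faces of $D$; by Miranda's theorem (equivalently a Brouwer-degree argument) it vanishes at some $(t_1^*,t_2^*)\in D$, i.e.\ $\eta(g(t_1^*,t_2^*))\in\mathscr{N}_{sc}$, and then $c_2\leq J(\eta(g(t_1^*,t_2^*)))\leq c_2-\varepsilon$, a contradiction. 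Hence $J'(\bar v)=0$ and $v:=\bar v$ is the desired sign-changing critical point, since $v^\pm\neq0$. I expect Step~3 to be the only genuinely delicate point, the two technical subtleties being the verification that $\eta$ does not annihilate the positive or negative part of $g(t_1,t_2)$ (using $\rho$ small, the lower bound $\norm{(g(t_1,t_2))^\pm}\geq(1-\delta)\norm{\bar v^\pm}$ on $D$, and the continuity of $u\mapsto u^\pm$ on $W_0^{s,p}(\Omega)$) and the fact that the boundary sign condition needed for the degree argument survives the deformation — which it does precisely because $\eta$ is the identity on $g(\partial D)$. Steps~0--2 are routine once Lemmas~\ref{4.1}, \ref{122}, \ref{1444}, Remark~\ref{219}, and the compactness of the subcritical terms are available.
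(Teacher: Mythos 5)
Your proof is correct in outline and lands on the same theorem by the same broad Nehari--degree scheme, but you make two choices that genuinely differ from the paper's own argument, and one of them is worth flagging.

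In Step~2, the paper does not pass to $\bar v=\bar t_1 v^+ + \bar t_2 v^-$ and appeal to weak lower semicontinuity along the projected sequence. Instead it first proves $v\in\mathscr{N}_{sc}$ directly: it observes that $u\mapsto\langle J'(u),u^\pm\rangle$ is weakly sequentially lower semicontinuous, so $\langle J'(v),v^\pm\rangle\leq0$, and then rules out strict inequality by a contradiction: if $\langle J'(v),v^+\rangle<0$, the projection $\Phi(v)=\bar t_+ v^+ + \bar t_- v^-$ has $\bar t_+<1,\ \bar t_-\leq1$, and expanding $J(\Phi(v))$ via the Nehari identity gives $J(\Phi(v))<\lim_n J(v_n)=c_2$, impossible. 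This also yields strong convergence $v_n\to v$ in $W_0^{s,p}(\Omega)$, which you do not obtain. Your route gives only ``$\bar v$ is a minimizer'' rather than ``the weak limit is a minimizer and the sequence converges strongly''; that is enough for the theorem, and it is arguably slightly cleaner because it sidesteps the case analysis on $\langle J'(v),v^\pm\rangle<0$, but one must note that it requires $v_k^\pm\rightharpoonup v^\pm$, which needs a short justification via compact embedding and the continuity of $u\mapsto u^\pm$ (the paper needs the same fact, so this is not extra debt). Also a cosmetic slip: in your nonvanishing argument for $v^\pm$ you wrote the Choquard term with $|v_k^\pm|^q$ in both factors, but $\langle J'(v_k),v_k^\pm\rangle=0$ only gives the mixed term $\int\int|v_k^\pm|^q|v_k|^q/|x-y|^\mu$; the paper's \eqref{555555} has it correctly.

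In Step~3 you use the quantitative deformation lemma and Miranda's theorem, whereas the paper takes the more elementary route: pick a fixed descent direction $\varphi$ with $\langle J'(v),\varphi\rangle<-1$, perturb by $\delta\varphi$, and consider the vector field $G_\delta(t_1,t_2)=(\langle J'(u),u^+\rangle,\langle J'(u),u^-\rangle)$ on a small ball $B((1,1),\varepsilon')$; continuity of $G_\delta$ in $\delta$ plus nonvanishing of $G_0$ on $\partial B$ gives a zero of $G_\delta$ for small $\delta>0$, and the fundamental theorem of calculus along $t\mapsto\tilde t_1 v^+ + \tilde t_2 v^- + t\varphi$ drops the energy by at least $\delta$, contradicting minimality. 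Your version requires you to verify that $\eta\circ g$ keeps both signed parts nonzero and that the Miranda boundary sign conditions hold on the faces of the square $D$; the latter does not follow instantly from strict concavity of $\psi$ alone (concavity gives sign information only along rays toward the maximum), so strictly speaking one should fall back to the degree-theoretic formulation you parenthetically invoke, which is exactly what the paper uses with a ball in place of a square. In short: Steps 0--1 match the paper; Step 2 is a valid alternative that trades the paper's contradiction argument (plus the bonus of strong convergence) for a direct lower-semicontinuity bound on the projection; Step 3 is the standard deformation-lemma variant of the paper's explicit one-parameter perturbation, and both reduce to the same Brouwer-degree fact -- the paper's version is the shorter one to make rigorous.
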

\begin{proof} We only give the proof for case (i). In fact,
the second equality in \eqref{336} follows from  Lemma \ref{4.1}. Since $\mathscr{N}_{sc}\subseteq \mathscr{N}$ we can infer $c_2\geq c_1>0$. Since
$$
 0\geq\langle J'(u),u^\pm \rangle\geq\langle  J'(u^\pm),u^\pm \rangle,  \ \ \ \forall u\in W_0^{s,p}(\Omega),
$$
 Lemma \ref{1444} provides $\delta_0$ depending only on the parameters such that
\begin{equation}\label{445}
  \ \ \ \ \ \ \ \ \ \  \norm{u^\pm}\geq\delta_0, \ \ \ \forall u\in \mathscr{N}_{sc}.
\end{equation}
Pick a minimizing sequence $\{v_n\}\subseteq \mathscr{N}_{sc}$. Since $\langle J'(v_n),v_n \rangle=0$ and $J(v_n)\rightarrow c_2$, Remark \ref{219} ensures that $\{v_n\}$ is bounded and up to subsequences converges weakly in $W_0^{s,p}(\Omega)$ and strongly in $L^r(\Omega,\frac{dx}{|x|^\alpha})$ for $1\leq r<p_\alpha^*$. First observe that $v^{\pm}\neq0$. Indeed we have
\begin{equation*}\label{446}
\int_{\Omega}\int_{\Omega}\frac{|v_n^\pm(y)|^q|v_n(x)|^q}{|x-y|^\mu}\,dxdy\rightarrow \int_{\Omega}\int_{\Omega}\frac{|v^\pm(y)|^q|v(x)|^q}{|x-y|^\mu}\,dxdy, \ \ \ \  \int_{\Omega}\frac{|v_n^\pm|^r}{|x|^\alpha}\,dx\rightarrow\int_{\Omega}\frac{|v^\pm|^r}{|x|^\alpha}\,dx,
\end{equation*}
and since $\langle J'(v_n),v_n^\pm\rangle=0$ for all $n\in\N$, we deduce from \eqref{1333} and \eqref{445}
\begin{eqnarray}\label{555555}
\begin{aligned}
\lambda\int_{\Omega}\frac{|v^\pm|^r}{|x|^\alpha}\,dx+\gamma\int_{\Omega}\int_{\Omega}\frac{|v^\pm|^q|v|^q}{|x-y|^\mu}\,dxdy&
=\hbox{lim}_{n\to\infty}\left(\int_{\Omega}\lambda\frac{|v_n^\pm|^r}{|x|^\alpha}\,dx+\gamma\int_{\Omega}\int_{\Omega}\frac{|v_n^\pm|^q|v_n|^q}{|x-y|^\mu}\,
dxdy\right)
\\&=\hbox{lim}_{n\to\infty}\langle (-\Delta)_p^sv_n,v_n^\pm \rangle \geq\hbox{lim}_{n\to\infty} \hbox{inf}\norm{v_n^\pm}^p\geq \delta_0^p>0.
\end{aligned}
\end{eqnarray}
Now we claim  that $v\in \mathscr{N}_{sc}$, i.e.
\begin{equation}\label{447}
  \langle J'(v),v^\pm  \rangle=0.
\end{equation}
The functional $u\mapsto\langle (-\Delta)_p^su,u^\pm\rangle$ is weakly sequentially lower semicontinuous by Fatou's lemma, since it can be represented as a nonnegative integral of the form $f(x,y,u(x),u(y))$. Therefore,  $u\mapsto\langle J'(u),u^\pm \rangle$ is wealy sequentially lower semicontinuous, and since \eqref{447} holds for any $v_n$, we deduce $\langle J'(v),v^\pm\rangle\leq0$. Suppose that,  $\langle J'(v),v^+ \rangle<0$ and $\Phi(v)$ be the projection on $\mathscr{N}_{sc}$ of $v$ given by Lemma \ref{4.1}, i.e.
\[
\ \ \ \ \ \ \ \ \ \ \Phi(v)=\bar{t}_+v^++\bar{t}_-v^-, \ \ \ \ (\bar{t}_+,\bar{t}_-)=ArgmaxJ(t_1v^++t_2v^-).
\]
By \eqref{228}, \eqref{229}, $\langle J'(v),v^+ \rangle<0$ and $\langle J'(v),v^- \rangle\leq 0$, it holds $\bar{t}_+<1$ and $\bar{t}_-\leq1$. Since $\langle J'(\Phi(v)),\Phi(v) \rangle=0$, we have
\begin{eqnarray*}
\begin{aligned}
J(\Phi(v))&=(\frac{\lambda}{p}-\frac{\lambda}{r})\int_{\Omega}\frac{|\Phi(v)|^r}{|x|^\alpha}\,dx\,dx+(\frac{\gamma}{p}-\frac{\gamma}{2q})
\int_{\Omega}\int_{\Omega}\frac{|\Phi(v)|^q|\Phi(v)|^q}{|x-y|^\mu}\,dxdy
\\&=(\frac{\lambda}{p}-\frac{\lambda}{r})\left(\bar{t}_+^r\int_{\Omega}\frac{|v^+|^r}{|x|^\alpha}\,dx+\bar{t}_-^r\int_{\Omega}\frac{|v^-|^r}{|x|^\alpha}\,dx\right)
\\&+(\frac{\gamma}{p}-\frac{\gamma}{2q})(\bar{t}_+^{2q}\int_{\Omega}\int_{\Omega}\frac{|v^+(x)|^q|v^+(y)|^q}{|x-y|^\mu}\,dxdy
+\bar{t}_-^{2q}\int_{\Omega}\int_{\Omega}\frac{|v^-(x)|^q|v^-(y)|^q}{|x-y|^\mu}\,dxdy
\\&+2(\bar{t}_+)^q(\bar{t}_-)^q\int_{\Omega}\int_{\Omega}\frac{|v^+(x)|^q|v^-(y)|^q}{|x-y|^\mu}\,dxdy)
\\&<(\frac{\lambda}{p}-\frac{\lambda}{r})\int_{\Omega}\frac{|v|^r}{|x|^\alpha}\,dx+(\frac{\gamma}{p}
-\frac{\gamma}{2q})\int_{\Omega}\int_{\Omega}\frac{|v|^q|v|^q}{|x-y|^\mu}\,dxdy
\\&=\hbox{lim}_{n\to\infty}\left((\frac{\lambda}{p}-\frac{\lambda}{q})\int_{\Omega}\frac{|v_n|^r}{|x|^\alpha}\,dx
+(\frac{\gamma}{p}-\frac{\gamma}{2q})\int_{\Omega}\int_{\Omega}\frac{|v_n|^q|v_n|^q}{|x-y|^\mu}\,dxdy\right)
\\&=\hbox{lim}_{n\to\infty}J(v_n)=c_2,
\end{aligned}
\end{eqnarray*}
which is a contradiction and \eqref{447} holds. This in turn implies that $v\in \mathscr{N}$ and $v_n\rightarrow v$ strong in $W_0^{s,p}(\Omega)$ since
\begin{eqnarray*}\begin{aligned}
\hbox{lim}_{n\to\infty}\norm{v_n}^p&=\hbox{lim}_{n\to\infty}\left(\lambda\int_{\Omega}\frac{|v_n|^r}{|x|^\alpha}\,dx+
\gamma\int_{\Omega}\int_{\Omega}\frac{|v_n(x)|^q|v_n(y)|^q}{|x-y|^\mu}\,dxdy\right)\\
&=\lambda\int_{\Omega}\frac{|v|^r}{|x|^\alpha}\,dx+\gamma\int_{\Omega}\int_{\Omega}\frac{|v(x)|^q|v(y)|^q}{|x-y|^\mu}dxdy=\norm{v}^p,\end{aligned}
\end{eqnarray*}
and therefore $J(v_n)\rightarrow J(v)=c_2$, proving that $v$ solves problem \eqref{336}. \\
Next we prove that $v$ is a critical point for $J$. Suppose by contradiction that $J'(v)\neq0$, then there exits $\varphi\in W_0^{s,p}(\Omega)$ such that $\langle J'(v),\varphi \rangle<-1$ and therefore by continuity there exists a sufficiently small $\delta_0>0$ such that
\begin{equation}\label{449}
 \langle J'(t_1v^++t_2v^-+\delta\varphi),\varphi \rangle<-1, \ \ \ \ \ \ \ if \ |1-t_1|, \ |1-t_2|, \ |\delta|<\delta_0.
\end{equation}
The function $\psi$ in Lemma \ref{4.1} is smooth and strictly concave and has a strict maximum in $(1,1)$, therefore for some $\varepsilon>0$,
$$
 \nabla\psi(s_1,s_2)\neq(0,0), \ \ \ \ \ \ \  \forall \ 0<|(s_1-1,s_2-1)|\leq \varepsilon.
$$
This implies by changing variables $(s_1,s_2)\mapsto(t_1^p,t_2^p)$ that for some $\varepsilon'>0$, which we can suppose smaller than $\delta_0/2$,
\begin{equation}\label{448}
 \big(\langle J'(t_1v^++t_2v^-),v^+ \rangle, \langle J'(t_1v^++t_2v^-),v^-\rangle \big)\neq (0,0), \ \ if \ 0<|(t_1-1,t_2-1)|\leq \varepsilon'
\end{equation}

Let $B=B((1,1),\varepsilon')$ and for any $\delta>0$, $(t_1,t_2)\in B$, define
$$
  u=u(\delta,t_1,t_2)=t_1v^++t_2v^-+\delta\varphi,
$$
(which is continuous in $\delta, t_1,t_2$), noting that for sufficiently small $\delta$, $u^\pm\neq0$. Now consider the field
$$
G_{\delta}(t_1,t_2)=(\langle J'(u),u^+ \rangle,\langle J'(u),u^- \rangle)\in \R^2.
$$
From \eqref{448},  $\mathop{inf}_{\partial B}|G_\delta|>0$ for $\delta=0$, and therefore  the same holds for any sufficiently small $\delta>0$ by continuity. Clearly $\delta\mapsto G_\delta$ is a homotopy, and $G_0(1,1)=(0,0)$. By elementary degree theory, the equation $G_\delta(t_1,t_2)=(0,0)$ has a solution
$(\widetilde{t_1},\widetilde{t_2})\in B$ for some small $\delta\in]0,\delta_0/2[$. If $\bar{v}=\widetilde{t_1}v^++\widetilde{t_2}v^-+\delta\varphi$, this amounts to $\bar{v}\in \mathscr{N}_{sc}$. Since it holds,
$$
J(\bar{v})=J(\widetilde{t_1}v^++\widetilde{t_2}v^-)+\int_0^\delta \langle J'(\widetilde{t_1}v^++\widetilde{t_2}v^-+\delta\varphi),\varphi \rangle\,dt,
$$
\eqref{449} applied to the integral provides
$$
J(\bar{v})\leq J(\widetilde{t_1}v^++\widetilde{t_2}v^-)-\delta.
$$

Since $v\in \mathscr{N}_{sc}$, Theorem \ref{4.1} gives $J(\widetilde{t_1}v^++\widetilde{t_2}v^-)\leq J(v)$, which, inserted into the previous inequality, contradicts the minimality of $J(v)$.
\end{proof}

\begin{remark}
For the proof of case (ii), we only point out the differences. In fact, \eqref{555555} should be
\begin{eqnarray*}\label{66}
\begin{aligned}
\gamma\int_{\Omega}\int_{\Omega}\frac{|v^\pm|^q|v|^q}{|x-y|^\mu}\,dxdy&=
\hbox{lim}_{n\to\infty}\gamma\int_{\Omega}\int_{\Omega}\frac{|v_n^\pm|^q|v_n|^q}{|x-y|^\mu}\,dxdy\\&
\\&=\hbox{lim}_{n\to\infty}\left(\langle (-\Delta)_p^sv_n,v_n^\pm \rangle -\lambda\int_\Omega\frac{|v_n|^p}{|x|^{sp}}dx \right)\\&
\geq\hbox{lim}_{n\to\infty} \hbox{inf}\norm{v_n^\pm}^p(1-\frac{\lambda}{S_{sp,p}})\geq (1-\frac{\lambda}{S_{sp,p}}) \delta_0^p>0.
\end{aligned}
\end{eqnarray*}
So, $v$ is sign-changing. From \cite{MR77777}, $\langle (-\Delta)_p^sv_n,v_n^\pm \rangle-\lambda\int_\Omega\frac{|u^\pm|^p}{|x|^{ps}}dx$ is sequentially
weakly lower semicontinuity, $\langle J'(v),v^\pm\rangle\leq0$, and $v\in\mathscr{N}_{sc}$. From \eqref{2.15}, $v_n\to v$ in $W_0^{s,p}(\Omega)$.
%\begin{eqnarray*}
%o_n(1)&=\langle J'(v_n),v_n\rangle-\langle J'(v),v\rangle=\norm{v_n-v}^p-\lambda\int_\Omega\frac{|v_n-v|^p}{|x|^{sp}}dx+o_n(1)\\
%&\geq(1-\frac{\lambda}{S_{sp}})\norm{v_n-v}^p+o_n(1).
%\end{eqnarray*}
and $J(v)=c_2$ by continuity, also we can get that $v$ is a critical point of $J$.
\end{remark}

\bigskip

\end{document}